\UseRawInputEncoding
\documentclass{amsart}
% \usepackage[UTF8]{inputenc} 
%!TEX program = xelatex
%\usepackage[UTF8]{ctex}
\usepackage{amsthm}
\usepackage{amssymb}
\usepackage{amsfonts}
\usepackage{amsmath}

\usepackage{pinlabel}
\usepackage{graphicx}
\usepackage{color}

\newtheorem{lem}{Lemma}[section]

\newtheorem{thm}{Theorem}[section]
\newtheorem{coro}{Corollary}[section]
\newtheorem*{ques}{Question}
\newtheorem*{remark}{Remark}
\newtheorem*{acknow}{Acknowledgments}

\author{Feihuang Xia}
\title{simple closed geodesics in cusped hyperbolic {$3$}-manifolds}

\begin{document}
\begin{abstract}
We show that cusped finite-volume hyperbolic {$3$}-manifolds contain infinitely many simple closed geodesics.
\end{abstract}

\maketitle
\section{Introduction}\label{sec1}

A closed geodesic in a Riemannian manifold is said to be simple if it has no self-intersections, or nonsimple otherwise. It is an open question which Riemannian manifolds contain simple closed geodesics or which do not. In this paper, we focus on simple closed geodesics in hyperbolic manifolds, that is, complete Riemannian manifolds of constant curvature $-1$.

In any closed hyperbolic $3$-manifold, any homotopy class of a loop admits a unique geodesic representative, and any shortest closed geodesic must be simple.
On the other hand, some infinite-volume hyperbolic $3$-manifolds have no simple closed geodesics, such as $\mathbb{H}^3$. 
C. Adams, J. Hass and P. Scott showed that the Fuchsian group corresponding to the thrice-punctured sphere generates the only exception of a complete non-elementary orientable hyperbolic $3$-manifold that does not contain a simple closed geodesic \cite{MR1650997}.

Heuristically we expect many hyperbolic $3$-manifolds contain infinitely many simple closed geodesics. 
For example, S. M. Miller proved that many hyperbolic $3$-manifolds contain infinitely many simple closed geodesics. Examples include the figure-eight knot complement \cite{MR1917428}.
S. M. Kuhlmann showed that if a closed orientable hyperbolic $3$-manifold satisfies certain geometric and arithmetic conditions, it contains infinitely many simple closed geodesics \cite{MR2369199}. 
She actually found many such examples by program searching. 
In the cusped manifold case, S. M. Kuhlmann gave a general result. She showed that there are infinitely many simple closed geodesics in cusped finite-volume orientable hyperbolic $3$-manifolds \cite{MR2263061}. 

T. Chinburg and A. W. Reid proved that there exist infinitely many mutually incommensurable closed hyperbolic $3$-manifolds in which all primitive closed geodesies are simple by some arithmetic hyperbolic $3$-manifold theory \cite{MR1243786}, where we say two manifolds $M$ and $N$ are commensurable if there is a manifold that covers both $M$ and $N$ with ﬁnite degrees and incommensurable otherwise. 
They showed that if there is a nonsimple geodesic in a closed hyperbolic $3$-manifold $M=\mathbb{H}^3/\Gamma$, an invariant of the commensurability class of $\Gamma$ called invariant quaternion algebra must satify certain condition. They then construct infinitely many nonisomorphic quaternion algebras that do not meet this condition. However, it is still unknown whether all closed hyperbolic $3$-manifolds contain infinitely many simple closed geodesics.

In this paper, we extend Kuhlmann's result to higher dimensions with toral ends, and to the non-orientable, $3$-dimensional cusped case:

\begin{thm}\label{ThmZnCusp0}
    A cusped nonelementary hyperbolic $n$-manifold contains infinitely many simple closed geodesics if there is a cusp which is a flat $(n-1)$-manifold $\mathbb{R}^{n-1}/\Gamma_c$ such that $\Gamma_c \cong \mathbb{Z}^{n-1}$ for $n\ge 3$.
\end{thm}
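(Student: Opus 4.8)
The plan is to produce an explicit infinite family of simple closed geodesics that make long excursions into the prescribed cusp, following and generalizing the method used in the orientable $3$-dimensional case. Normalize $\Gamma=\pi_1(M)$ in the upper half-space model $\mathbb{H}^n=\mathbb{R}^{n-1}\times\mathbb{R}_{>0}$ so that the given cusp is at $\infty$ and $\Gamma_c=\mathrm{Stab}_\Gamma(\infty)$ is the group of translations by a lattice $L\cong\mathbb{Z}^{n-1}$; since $M$ is nonelementary, $\Gamma_c\subsetneq\Gamma$. Fix a horoball $H=\mathbb{R}^{n-1}\times[h_0,\infty)$ that embeds in $M$ and lies strictly above every other $\Gamma$-translate of itself (take it just below the maximal horoball at $\infty$), with embedded cusp neighbourhood $N=H/\Gamma_c\cong(\mathbb{R}^{n-1}/L)\times[h_0,\infty)$. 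First I would fix once and for all an element $\gamma\in\Gamma\setminus\Gamma_c$ with the property that, writing $x_0=\gamma(\infty)$, the vertical geodesic segment over $x_0$ from $\partial H$ to the horoball $\gamma H$ projects to an embedded essential arc $\delta_\infty\subset M\setminus N^{\circ}$ meeting $\partial N$ transversally only at its two endpoints. One checks that a $\gamma$ realizing the maximal-horoball tangency has this property: the relevant segment lies just below $\partial H$ and above all other horoballs, so it cannot re-enter $N$, and $\pi$ is injective on it because a torsion-free $\Gamma$ can neither fix nor interchange the two parabolic points $\infty$ and $x_0$.

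For each $w\in L$ I would set $h_w=\gamma\,p_w$, where $p_w$ is the translation by $w$. For $|w|$ large, $h_w$ is loxodromic with fixed points $a^{+}_w\to x_0$ and $a^{-}_w\to\gamma^{-1}(\infty)-w$, and translation length $\ell(h_w)=2\log|w|+O(1)\to\infty$; thus the closed geodesics $c_w$ represented by the $h_w$ realize infinitely many distinct lengths, and it suffices to show $c_w$ is simple for infinitely many $w$. The key observation is that the axis $A_w$ of $h_w$ decomposes, modulo $\langle h_w\rangle$, into exactly two pieces: a long \emph{cusp excursion} — the sub-arc of the vertical half-circle over $[a^{-}_w,a^{+}_w]$ lying in $H$, which rises to height comparable to $|w|$ — and a short arc, an $\langle h_w\rangle$-translate of the segment over $x_0$ used above, which lies outside every $\Gamma$-translate of $H$ and converges to $\delta_\infty$ as $|w|\to\infty$. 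Here one uses that $H$ lies above all other horoballs, so that $A_w$ can only leave the $\langle h_w\rangle$-orbit of $H$ along this short arc, whose location is pinned near $x_0$ by the fixed-point asymptotics. Hence $c_w$ crosses $\partial N$ exactly twice and $c_w=(\text{cusp excursion in }N)\cup\delta_w$ with $\delta_w\to\delta_\infty$.

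Next I would analyse embeddedness of $c_w$ piece by piece. Outside $N$: as $\delta_\infty$ is an embedded essential arc meeting $\partial N$ transversally, $\delta_w$ is embedded and meets $\partial N$ only at its endpoints for all $|w|$ large. Inside $N$: the cusp excursion projects into $(\mathbb{R}^{n-1}/L)\times[h_0,\infty)$ to a path whose height is symmetric about its apex and whose horizontal shadow is the straight segment $[a^{-}_w,a^{+}_w]$; a direct computation then shows this projection is embedded precisely when the direction $\hat u_w$ of $a^{+}_w-a^{-}_w$ is $L$-irrational, i.e.\ $\mathbb{R}\hat u_w\cap L=\{0\}$ — an $L$-rational direction forces roughly $|w|$ double points, matched in height by the apex symmetry. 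Using the decomposition, together with the facts that $\Gamma_c$ acts freely on $\mathbb{R}^{n-1}$ and that $\Gamma$ has no $2$-torsion, a short case analysis reduces every self-intersection of $c_w$ to either a self-intersection of the cusp projection (hence $\hat u_w$ $L$-rational) or one of $\delta_w$. So the whole argument comes down to showing that $\hat u_w$ is $L$-irrational for infinitely many $w\in L$.

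I expect this last step to be the main obstacle. The direction $\hat u_w$ is an algebraic function of $w$ coming from the fixed-point equation of $h_w$ — in dimension three it is the direction of $\sqrt{\operatorname{tr}(h_w)^2-4}/c_\gamma$ — and the plan is to show it is $L$-rational for only finitely many $w$: $L$-rationality would force a square root of a quadratic, non-square function of $w$ into a fixed rational subspace, which is a Pell/unit-type condition solvable in $w$ only finitely often in the arithmetic case, and which can be made uniform in general by restricting $w$ to a one-parameter sub-progression of $L$ and reading off the leading term. Having secured infinitely many such $w$, the corresponding $c_w$ are simple and of infinitely many distinct lengths, proving the theorem. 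Finally I would remark on the role of the hypothesis: full rank $n-1$ of $\Gamma_c$ makes the cusp cross-section a closed flat manifold, so that an excursion can wind all the way around it, while torality — the absence of an orientation-reversing glide in $\Gamma_c$ — is exactly what makes the dichotomy ``cusp excursion embedded $\iff$ $\hat u_w$ $L$-irrational'' clean, since for a non-toral flat cusp such a glide can create double points regardless of the slope.
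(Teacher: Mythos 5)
Your setup coincides with the paper's: both consider the family of isometries obtained by composing a fixed element realizing the maximal-horoball tangency with parabolic translations $p_w$, both use the asymptotics of the fixed points ($a^+_w\to x_0$, $a^-_w\to\gamma^{-1}(\infty)-w$, which is the paper's Lemma~\ref{lem1}), and both reduce simplicity of $c_w$ to embeddedness of the cusp excursion, i.e.\ to the nonexistence of two points on the chord over $[a^-_w,a^+_w]$ identified by the lattice $L$ (the paper's Lemma~\ref{lem2}). The divergence, and the gap, is in how you discharge that last condition. You assert that the excursion is embedded \emph{precisely when} the chord direction $\hat u_w$ is $L$-irrational, and you then try to prove $L$-irrationality for infinitely many $w$ by a Pell/unit-type argument that you yourself flag as the main obstacle and only sketch. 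Two problems: first, the ``precisely when'' is false as stated --- an $L$-rational direction produces double points only if the primitive lattice vector in that direction is no longer than the chord, so $L$-irrationality is sufficient but far from necessary; second, and more seriously, exact $L$-(ir)rationality of $\hat u_w$ is not determined by the asymptotics $a^+_w-a^-_w = w + c + O(1/|w|)$, so you would need exact algebraic control of the fixed points of $\gamma p_w$ in dimension $n$, and your proposed arithmetic argument (``restricting $w$ to a one-parameter sub-progression and reading off the leading term'') does not establish this. As written, the proof is incomplete at its decisive step.

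The paper avoids the Diophantine question entirely by exploiting the weaker, correct criterion together with a judicious choice of sub-family. Writing $A_0=0$ and $B_0=(b_1,\dots,b_{n-1})$ with $0\le b_i<1$ in the basis of lattice generators, and noting $b_2\neq 0$ (after relabeling), it takes $w=t_1^n$, so the chord runs from (approximately) $0$ to $(b_1+n,b_2,\dots,b_{n-1})$. The chord then has extent less than one lattice spacing in \emph{every} coordinate direction except the first, so any $t=\sum w_i t_i$ identifying two of its points must have $w_i=0$ for $i\ge 2$; but a nonzero multiple of $t_1$ cannot join two points of a segment whose direction has nonzero second coordinate $b_2$. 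This disposes of all large $n$ at once, with no irrationality, no arithmeticity, and no case analysis on the slope. If you replace your ``$L$-irrational direction'' criterion by ``no nonzero lattice vector parallel to the chord and shorter than it'' and then specialize $w$ to powers of a single generator transverse to the offset $B_0-A_0$, your argument closes.
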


\begin{thm}
    Any nonelementary hyperbolic $3$-manifold with a virtually rank-$2$ cusp contains infinitely many simple closed geodesics.
    In particular, any cusped finite-volume hyperbolic $3$-manifold contains infinitely many simple closed geodesics.
\end{thm}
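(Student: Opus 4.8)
The plan is to split on the topological type of the virtually rank-$2$ cusp and to reduce the only new case to Theorem~\ref{ThmZnCusp0}. Write $M=\mathbb{H}^3/\Gamma$ and let $\Gamma_c\le\Gamma$ be the given cusp group. By hypothesis $\Gamma_c$ is virtually $\mathbb{Z}^2$; being a discrete group of parabolics fixing a point of $\partial\mathbb{H}^3$, it acts freely and properly discontinuously on a horosphere $\cong\mathbb{R}^2$, and the torsion-free finite-index copy of $\mathbb{Z}^2$ acts there by translations, hence as a lattice, so the $\Gamma_c$-action is cocompact and $\mathbb{R}^2/\Gamma_c$ is a closed flat surface: a torus, with $\Gamma_c\cong\mathbb{Z}^2$, or a Klein bottle. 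In the torus case Theorem~\ref{ThmZnCusp0} with $n=3$ applies directly. Since every cusp cross-section of a finite-volume hyperbolic $3$-manifold is a closed flat surface, hence a torus or a Klein bottle, this also yields the ``in particular'' clause once the Klein bottle case is settled.

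So assume $\mathbb{R}^2/\Gamma_c$ is a Klein bottle; then $M$ is non-orientable, since a horoball cusp neighbourhood is homeomorphic to $(\text{Klein bottle})\times[0,\infty)$. Normalize the cusp to be at $\infty$, with $\Gamma_c=\langle a,b\rangle$ where $a\colon z\mapsto z+i\beta$ is a translation and $b\colon z\mapsto\bar z+\alpha$ a glide reflection, so that $\Lambda:=\langle a,b^2\rangle\cong\mathbb{Z}^2$ is the translation lattice of $\Gamma_c$ and $\Gamma_c=\Lambda\sqcup b\Lambda$. I would then carry out the construction from the proof of Theorem~\ref{ThmZnCusp0}, building the horospherical geometry at $\infty$ from the rank-$2$ lattice $\Lambda$ in place of the (absent) torus cusp group. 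This produces closed geodesics of $M$ whose axes in $\mathbb{H}^3$ stay, away from a fixed compact part, in a controlled nearly-vertical tube over a short segment of $\mathbb{R}^2/\Lambda$, and which, by that same argument, avoid all of their $\Gamma$-translates coming from $\Lambda$ and from the return data; so the only self-intersections that could remain come from the coset $b\Lambda$ of glide reflections.

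Ruling those out is the one genuinely new point, and the step I expect to be the main obstacle. If $\mathcal{A}$ is the axis of such a constructed geodesic, one must check $\delta\mathcal{A}\cap\mathcal{A}=\varnothing$ for every $\delta\in b\Lambda$. Because $b$ acts on the cusp as $z\mapsto\bar z+\alpha$ followed by a height-preserving map, the image $b\mathcal{A}$ of the cusp portion of $\mathcal{A}$ lies in the reflected tube; so the plan is to choose the free parameters of the construction --- the slope wound around and the depth of penetration --- so that this tube and each of its images under $b\Lambda$ lie over disjoint segments of $\mathbb{R}^2/\Lambda$ (for instance, winding around a slope transverse to the axis of the glide and penetrating deeply enough that the two tubes lie in distinct vertical planes). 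Making this quantitative is what requires re-opening the proof of Theorem~\ref{ThmZnCusp0} rather than using it as a black box.

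A conceptually cleaner alternative, with the same essential difficulty, is to pass to the orientation double cover $\pi\colon\widehat{M}\to M$, with deck involution $\sigma$, a fixed-point-free orientation-reversing isometry. Since the orientation character of $M$ is nontrivial on $\pi_1$ of the Klein bottle cusp, that cusp is covered by a single torus cusp of $\widehat{M}$, so Theorem~\ref{ThmZnCusp0} gives infinitely many simple closed geodesics in $\widehat{M}$. A short point-set argument, using that $\sigma$ is free and $\pi$ a covering, shows that for an embedded closed geodesic $\widehat{\gamma}$ of $\widehat{M}$ the image $\pi(\widehat{\gamma})$ is simple in $M$ if and only if $\sigma(\widehat{\gamma})=\widehat{\gamma}$ or $\sigma(\widehat{\gamma})\cap\widehat{\gamma}=\varnothing$, and that at most two such $\widehat{\gamma}$ share an image; so it suffices to produce infinitely many simple closed geodesics of $\widehat{M}$ of this type, which once again amounts to running the construction of Theorem~\ref{ThmZnCusp0} $\sigma$-equivariantly in the torus cusp, the obstacle being exactly the interaction of the glide reflection $\sigma$ with the simplicity estimates.
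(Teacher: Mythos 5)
Your reduction to the Klein-bottle cusp case is correct and matches the paper's setup: the torus case is Theorem \ref{thm1} with $n=3$, and in the remaining case $\Gamma_\infty$ is generated by a glide reflection $s$ and a perpendicular translation $t$. But the proposal stops exactly where the proof begins. You name the coset of glide reflections as ``the main obstacle'' and offer only a plan --- choose the winding slope and the depth of penetration so that the tube over the arc and its images under glide reflections lie over disjoint segments --- without carrying it out. That plan fails as stated because the slope is not a free parameter: by Lemma \ref{lem1} the endpoints of $l_t$ converge to $A_0$ and $B_t=t(B_0)$, where $A_0$ and $B_0$ are dictated by the self-tangency of the maximal horocusp, so the available directions are exactly $\overrightarrow{A_0B_0}+\vec{v}$ with $\vec{v}$ ranging over the discrete set of translation vectors realized by $\Gamma_\infty$. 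In the worst configuration --- $A_0$ and $B_0$ on a common line perpendicular to the glide axis and symmetric about it (the paper's Case 2.1) --- the projected segment $C_tD_t$ is, up to the $\varepsilon$-error of Lemma \ref{lem1}, carried into itself by a glide reflection for \emph{every} admissible choice of translation, so no choice of slope separates the projections and the projected-segment criterion of Lemma \ref{lem2} cannot apply.

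The paper handles this by a case analysis on the relative position of $A_0$ and $B_0$, and in the critical Case 2.1 it abandons the projected segment and uses the height coordinate along the actual geodesic arc: since $\Gamma_\infty$ preserves horospheres about $\infty$, two points of $l_t$ identified by some $w\in\Gamma_\infty$ must lie at equal heights, hence be symmetric about the apex of the arc, which forces the axis of the glide reflection $w$ to pass through the midpoint of $C_tD_t$; taking $\tau_n=t^n s$ places that midpoint's $y$-coordinate at $-a+n\beta/2$ up to $\varepsilon$, with $0<|{-a}\pm\varepsilon|<\beta/2$, hence off the discrete set $\{m\beta/2\}$ of glide axes. This midpoint/height argument is the missing idea, and nothing in your proposal substitutes for it. Your double-cover alternative is a genuinely different framing, but as you yourself note it reduces to showing the lifted geodesics are $\sigma$-invariant or disjoint from their $\sigma$-images, which is the same unresolved interaction with the glide reflection; so neither route in the proposal closes the gap.
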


Since it seems true that the self-intersections of a loop must be rare in high dimensional manifolds, there is a well-known question:
\begin{ques}
    Does every cusped hyperbolic $n$-manifold M contain infinitely many simple closed geodesics?
\end{ques}

Our method requires to analyse the cusp structure for constructing simple closed geodesics.
However, the cusp structure will be complicated as the dimension increases.
Our approach to construct simple closed geodesics depends on the location of self-tangencies.
So we will discuss different situations as the cusp structure or the self-tangencies change respectively.
These difficulties do not quite show up in the orientable $3$-dimensional case, but they will always appear when we consider higher dimensional cases.
Therefore, our case-by-case approach to the $3$-dimensional case does not automatically generalize to higher dimensions.

To prove these theorems, we follow the basic idea of Kuhlmann, and analyze structure of the cusp.
The cusp of a hyperbolic $n$-manifold $M$ admits a flat metric. Therefore, it can be represented as $\mathbb{R}^{n-1}/\Gamma_c$, where $\mathbb{R}^{n-1}$ contains Euclidean structure and $\Gamma_c$ is an isometric discrete group acting freely. 

When $M$ is a finite-volume hyperbolic orientable $3$-manifolds, $\Gamma_c \cong\mathbb{Z}^2$, generated by two parabolic transformations.
In \cite{MR2263061}, S.M. Kuhlmann considered an infinite family of closed geodesics that surround the cusp and showed that infinitely many of those are embedded.

To find this family, we consider the axis of isometries $t\circ g$, where $g$ is some certain isometry and $t$ is some isometry stabilizing a distinguished horocusp, where the axis is defined to be the geodesic which is fixed under a hyperbolic isometry.
We actually find out a specific family of such geodesics by letting $t$ vary along some carefully chosen direction.

To prove these geodesics are simple, we will estimate the locations of these axis first. Then we claim that the ends of these geodesics are limited in some certain small open balls when $t$ is `large' enough. Then we can study these geodesics more conveniently. That are Lemma \ref{lem1} and Lemma \ref{lem2}, which we will prove in the next section.

In this paper, we will use the similar method in \cite{MR2263061} to find this family of closed geodesics and make some extensions.
Then we deal with the $3$-dimensional non-orientable case.

This paper is organized as follows. In section \ref{sec2}, we define some notations and prove two lemmas which are mentioned above. In section \ref{sec3}, we extend the conclusion of \cite{MR2263061} to $n$-dimension. In section \ref{sec4}, we discuss the case of nonelementary hyperbolic $3$-manifolds with a virtually rank-$2$ cusp and find out infinitely simple closed geodesics. In section \ref{sec5}, we discuss some further work.

\begin{acknow}
    I would like to thank my supervisor Yi Liu for introducing me to this project, guiding me through my research and providing me many helpful comments and remarks along the way.  
\end{acknow}

\section{Preparation}\label{sec2}

In this section, we will introduce some preliminary notations and some elementary estimates.
The following notations will be used throughout this paper:
the horocusp $\mathcal{H}_0$ and $\mathcal{H}_\infty$, the group of isometries $\Gamma_\infty$, the isometry $g_0$, the distance function $d_E$, and the points $A_0$, $B_0$, $B_t$, $C_t$ and $D_t$.
They will be defined in this section.

We will explain that the ends of axis of $t\circ g$ are limited in some certain small open balls when $t$ is `large' enough, where $g$ is some certain isometry and $t$ changes in the set $\Gamma_\infty$ which will be defined later. We estimate the location of the axis of $t\circ g$.

Let $M=\mathbb{H}^n/ \Gamma$ be a nonelementary hyperbolic $n$-manifold with at least one cusp, where $\Gamma\cong \pi_1(M)$ is a nonelementary discrete torsion-free subgroup of the isometry group of hyperbolic $n$-space $\mathrm{Isom}(\mathbb{H}^n)$.
We choose a cusp $\mathcal{C}$ of $M$.
Since $\Gamma$ is a nonelementary, we can always take the maximal embedded horocusp $\mathcal{T}$ of $\mathcal{C}$ whose boundary contains a finite numbers of self-tangencies.
Pick a distinguished self-tangency $A$. 

Let $\widetilde{A}$ be a lift of $A$ under the covering projection $\pi: \widetilde{M}=\mathbb{H}^n\to M$. $\mathcal{H}$ and $\mathcal{H'}$ are two horoballs which are preimages $\pi^{-1}(\mathcal{T})$ of $\mathcal{T}$ that intersect at $\widetilde{A}$. Since $\mathcal{H}$ and $\mathcal{H'}$ both cover $\mathcal{T}$, there must be some isometries relating them, namely, there is some isometry $g\in \Gamma$ such that $g(\mathcal{H})=\mathcal{H'}$.

\begin{figure}[h]
\centering  
\includegraphics[width=0.9\textwidth]{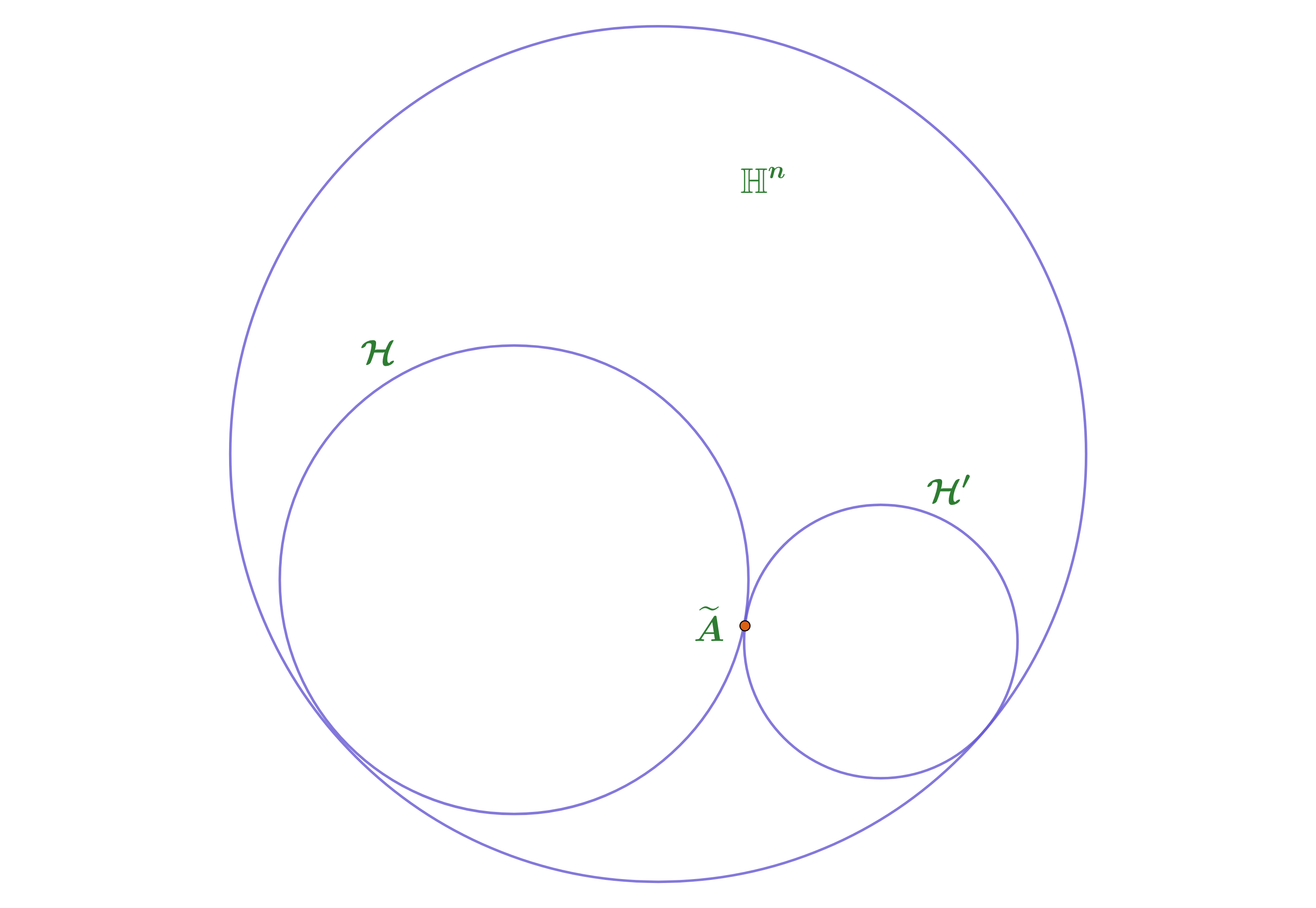}  
\caption{the maximal embedded horoballs}
\end{figure} 

For any isometries $g, g' \in \Gamma$ which map $\mathcal{H}$ to $\mathcal{H'}$, $g'\circ g^{-1}$ stabilizes $\mathcal{H'}$. It means that $g'=t\circ g$ and $t\in \Gamma$ is an isometry stabilizing $\mathcal{H'}$. We fix a distinguished $g$ and adjust $t$ to get a family of isometries $t\circ g$ which map $\mathcal{H}$ to $\mathcal{H'}$.

Consider the upper half-space model of $\mathbb{H}^n$, which is the space $\mathbb{H}^n=\{(x_1,\cdots,x_n)\in \mathbb{R}^n|x_n>0\}$ with the Riemannian metric $(d x_1^2+\cdots+d x_n^2) / x_n^2$. Without loss of generality, we assume that $\mathcal{H}$ is a Euclidean ball of radius $1$ centered at $(0,\cdots,0,\frac{1}{2})$, and $\mathcal{H'}$ is the domain $\{(x_1,\cdots,x_n)\in \mathbb{R}^n|x_n\ge 1\}$ above the hyperplane $x_n=1$. $\mathcal{H'}$ intersects $\mathcal{H}$ at the point $A_0=(0,\cdots,0,1)$. We rewrite $\mathcal{H}$ as $\mathcal{H}_0$, and $\mathcal{H'}$ as $\mathcal{H}_\infty$ below. 

Let $g_0$ be our fixed isometry that sends $\mathcal{H}_0$ to $\mathcal{H}_\infty$ and $\Gamma_\infty \subset \Gamma$ consist of all the isometries stabilizing $\mathcal{H}_\infty$.
We fix a fundamental domain $\mathcal{D} \subset \partial H_\infty$ contains $A_0$ with respect to the action of $\Gamma_\infty$, for example, a suitable parallelogram.
Possibly after modifying $g_0$ with a translation in $\Gamma_\infty$, we can assume that the point $B_0=g_0(A_0)$ lies in $\mathcal{D}$.
Denote $t\circ g_0$ as $g_t$ where $t\in \Gamma_\infty$ is an isometry stabilizing $\mathcal{H}_\infty$ and $B_t=g_t(A_0)=t(B_0)$. we use $\|t\|$ to denote $d_E(A_0,B_t)$, where $d_E$ stands for the Euclidean distance with respect to the induced Riemannian metric on $\partial \mathcal{H}_\infty$.

\begin{figure}[h]
\centering  
\includegraphics[width=1\textwidth]{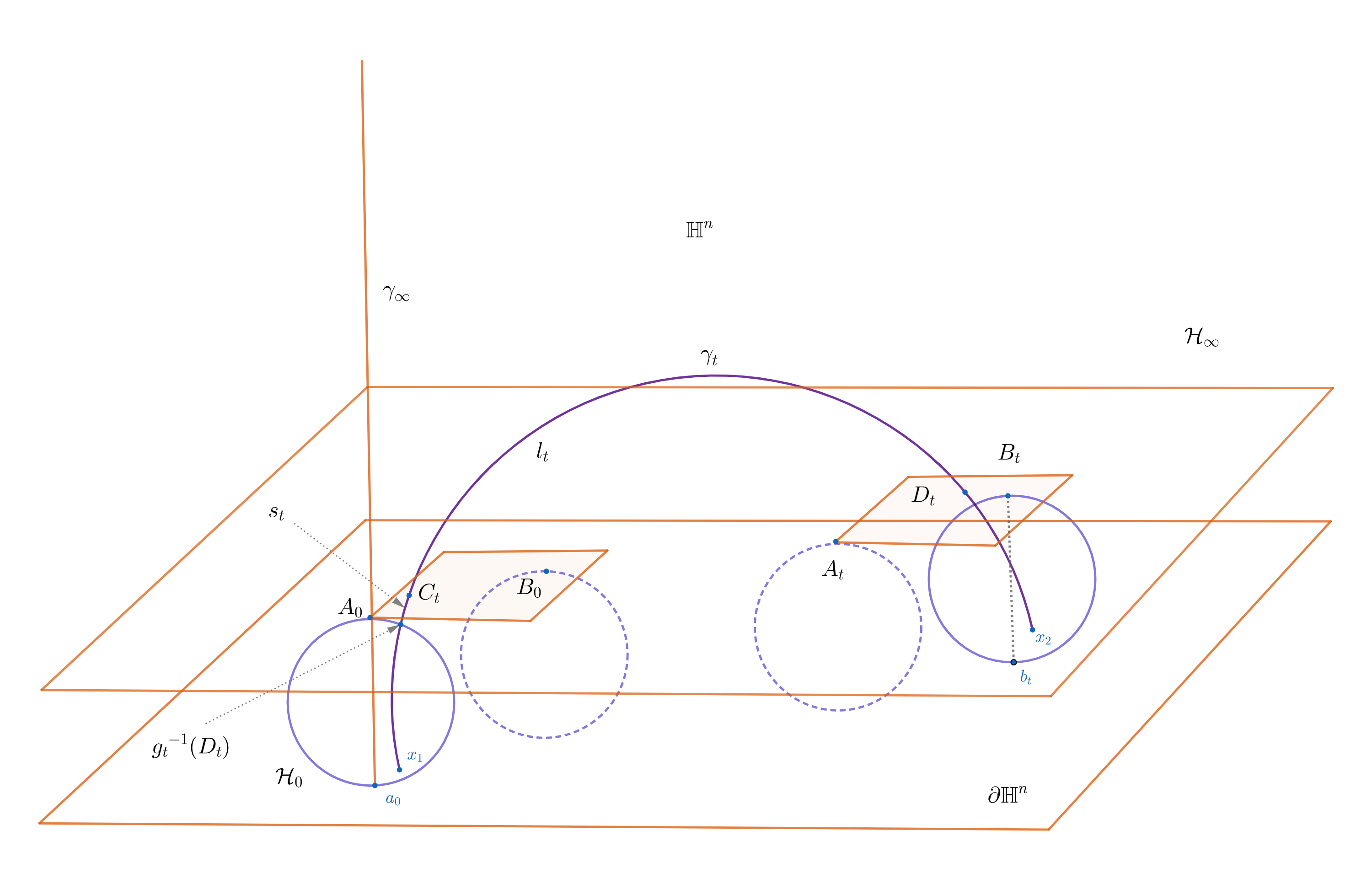}  
\caption{the upper half-space model}
\end{figure} 

Suppose $g_t$ is hyperbolic and denote the axis of $g_t$ as $\gamma_t$ which intersects $\partial \mathcal{H}_\infty=\{(x_1,\cdots,x_n)\in \mathbb{R}^n|x_n= 1\}$ at the points $C_t$ and $D_t$. Let $\gamma_\infty$ be the line which is perpendicular to $\partial {\mathbb{H}}^n$ through the point $A_0$. 
We will see that $C_t$, $D_t$ tend to $A_0$, $B_t$ respectively, then we can study the geodesic $\gamma_t$ via $A_0$ and $B_t$ when $\|t\|$ is large enough.

In \cite{MR2263061}, S. M. Kuhlmann use fractional linear transformations to get the similar result in $3$-dimensional case. 
For general dimension $n$, we switch to vector calculations in the upper half space model. 

\begin{lem}\label{lem1}
    For $t \in \Gamma_\infty$, $C_t\to A_0, D_t\to B_t$, when $\|t\|\to \infty$.
\end{lem}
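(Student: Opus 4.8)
The plan is to write down $g_t = t\circ g_0$ explicitly as a composition of hyperbolic isometries in the upper half-space model and track how its fixed points on $\partial\mathbb{H}^n$ move as $\|t\|\to\infty$. First I would record what $g_0$ does: it maps the horoball $\mathcal{H}_0$ (Euclidean ball of radius $1$ centered at $(0,\dots,0,\tfrac12)$, tangent to the boundary at the origin) to the horoball $\mathcal{H}_\infty = \{x_n\ge 1\}$, and it sends $A_0 = (0,\dots,0,1)$ to $B_0\in\mathcal{D}$. Since $A_0$ is the Euclidean "top" of $\mathcal{H}_0$ and the point of tangency of $\mathcal{H}_0$ with $\partial\mathbb{H}^n$ is the origin, $g_0$ carries the origin to the point on $\partial\mathbb{H}^n$ that is the Euclidean foot of $\mathcal{H}_\infty$-side; in the half-space picture this is the foot of $B_0$, i.e. the point $\beta_0\in\mathbb{R}^{n-1}\times\{0\}$ directly below $B_0$. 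Then $t\in\Gamma_\infty$ is a Euclidean isometry of $\partial\mathcal{H}_\infty$ (a parabolic translation together with a possible rotation fixing the $x_n$-axis direction); it moves $B_0$ to $B_t = t(B_0)$ with $\|t\| = d_E(A_0,B_t)\to\infty$, and it moves $\beta_0$ to $\beta_t$, the foot of $B_t$.

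The key computation is to locate the two fixed points of $g_t$ on $\partial\mathbb{H}^n$. One fixed point, call it $p_t$, is the attracting/repelling point coming from the "origin side": $g_t$ maps a tiny horoball at the origin-direction into $\mathcal{H}_\infty$, so one endpoint of $\gamma_t$ lies inside the Euclidean disk that is the shadow of $\mathcal{H}_0$, hence Euclidean-close to the origin; the other fixed point $q_t$ lies near $\beta_t$ because $g_t^{-1}$ likewise contracts toward there. More precisely, I would use the standard fact that for a hyperbolic isometry the endpoints of the axis, together with the translation length, are determined by how it maps one horoball to another: since $g_t(\mathcal{H}_0)=\mathcal{H}_\infty$ and the Euclidean diameter of $\mathcal{H}_0$ is $1$ while $\mathcal{H}_\infty$ has "diameter $\infty$," the derivative of $g_t$ at its near-origin fixed point is large, forcing that fixed point into an ever-smaller Euclidean neighborhood of the origin as the relevant horoball shrinks — and the shrinking is governed by $\|t\|$, because the larger $\|t\|$ is, the deeper $g_0^{-1}$ must have mapped things. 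Concretely, writing $g_t$ as an explicit matrix/Clifford-algebra product (as Kuhlmann does in dimension $3$, and here via vector calculus), the two roots of the fixed-point equation are $p_t = O(1/\|t\|^2)$ away from $0$ and $q_t$ is $O(1/\|t\|^2)$ away from $\beta_t$. Intersecting the axis $\gamma_t$ (the Euclidean semicircle over $p_t, q_t$) with the plane $x_n=1$ then gives $C_t$ near the top of $\mathcal{H}_0$, i.e. near $A_0$, and $D_t$ near $B_t$, with the error again controlled by a negative power of $\|t\|$; this is exactly the claimed convergence $C_t\to A_0$, $D_t\to B_t$.

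I expect the main obstacle to be purely bookkeeping: pinning down the explicit form of $g_0$ (and hence of $g_t$) as a matrix acting on $\partial\mathbb{H}^n$ well enough to solve the quadratic fixed-point equation and then to bound the two roots and the semicircle's intersection with $x_n=1$, all uniformly in the rotational part of $t$. In dimension $3$ one can hide this inside $\mathrm{PSL}_2(\mathbb{C})$ and a single fractional linear transformation; for general $n$ I would instead argue more geometrically — parametrize $g_t$ by its action on the distinguished horoball, use that $g_t$ decreases the Euclidean size of $\mathcal{H}_0$ by a factor comparable to $\|t\|^{-2}$ when mapped to $\mathcal{H}_\infty$, and deduce the location of the fixed points from the contraction/expansion rates. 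One should also check $g_t$ is genuinely hyperbolic for large $\|t\|$ (it is: it moves the point $A_0\in\partial\mathcal{H}_0$ to $B_t$, whose horoball-shadow is disjoint from the origin once $\|t\|$ is large, so $g_t$ cannot be parabolic or elliptic), which is implicitly needed for $\gamma_t$ to exist. The rest is elementary Euclidean geometry of circles meeting the horizontal hyperplane $x_n=1$.
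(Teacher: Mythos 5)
Your proposal follows essentially the same route as the paper: write $g_t=t\circ g_0$ explicitly in the upper half-space model (the paper uses the decomposition $g_0=h_2\circ h_1\circ V$ with $h_1$ an inversion, $h_2$ a reflection and $V$ orthogonal), solve the fixed-point equation, show one root stays Euclidean-close to the origin and the other close to the foot of $B_t$, and then read off $C_t\to A_0$, $D_t\to B_t$ by intersecting the semicircular axis with $x_n=1$. The only substantive discrepancy is your claimed rate $O(1/\|t\|^2)$ for the displacement of the fixed points: the actual bound (and what the paper's inequalities deliver) is $O(1/\|t\|)$ for the root near the origin, but this is immaterial to the qualitative convergence asserted in the lemma.
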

    
    Here $D_t\to B_t$ means $d_E(D_t,B_t)$ will be arbitrarily small with $\|t\|$ increasing.
\begin{proof}
    We estimate the endpoints of $\gamma_t$.
    First we describe $g_t$ by some simple isometries.

    Denote the inversion about the Euclidean $(n-1)$-sphere of radius $1$ centered at the origin as $h_1$:
        $$h_1(\overrightarrow{X})=\frac{\overrightarrow{X}}{\|\overrightarrow{X}\|^2},$$
    where $\|\cdot\|$ means the Euclidean norm $\sqrt{(\overrightarrow{x},\overrightarrow{x})}$ in vector space as usual.

    Denote the reflection about a Euclidean $(n-1)$-plane orthogonal to $\partial \mathbb{H}^n$ which sends $A_0$ to $B_0$ as $h_2$:
    $$h_2(\overrightarrow{X})=\overrightarrow{X}-\frac{2(\overrightarrow{X},\overrightarrow{b})}{\|\overrightarrow{b}\|^2}\overrightarrow{b}+\overrightarrow{b},where \overrightarrow{b}=\overrightarrow{A_0B_0}.$$

    It is obvious that $h_1$ sends $\mathcal{H}_0$ to $\mathcal{H}_\infty$ fixing $A_0$, and $h_2$ keeps $\mathcal{H}_\infty$ sending $A_0$ to $B_0$.
    Let $h=h_2\circ h_1$.
    Then we see that $h(\mathcal{H}_0)=\mathcal{H}_\infty$ and $h(A_0)=B_0$.

    Notice that $g_0$ sends $\mathcal{H}_0$ to $\mathcal{H}_\infty$ and $g_0(A_0)=B_0$, so $h^{-1}g_0$ is an isometry keeping $A_0$ and $\mathcal{H}_\infty$. 
    Therefore, $h^{-1}g_0$ fixes all plane that parallel with $\partial \mathbb{H}^n$.
    So $h^{-1}g_0\in O(\mathbb{R}^n)$, where $O(\mathbb{R}^n)$ consists of orthogonal $n\times n$ matrices. We denote $h^{-1}g_0$ as $V$.

    Now we have $g_0=h\circ V, g_t=t\circ h\circ V$ where $t$ is a Euclidean isometry stabilizing $\mathcal{H}_\infty$. Assume $t(\overrightarrow{X})=A\overrightarrow{X}+\overrightarrow{c}$ where $A \in O(\mathbb{R}^n)$.

    To solve the endpoints of $\gamma_\infty$, let $g_t(\overrightarrow{X})=\overrightarrow{X}$. Then we obtain

    $$A(\frac{V\overrightarrow{X}}{\|V\overrightarrow{X}\|^2}-\frac{2(V\overrightarrow{X},\overrightarrow{b})}{\|V\overrightarrow{X}\|^2\|\overrightarrow{b}\|^2}\overrightarrow{b}+\overrightarrow{b})+\overrightarrow{c}=\overrightarrow{X}$$
    $$\overrightarrow{X}-A\frac{V\overrightarrow{X}}{\|V\overrightarrow{X}\|^2}+\frac{2(V\overrightarrow{X},\overrightarrow{b})}{\|V\overrightarrow{X}\|^2\|\overrightarrow{b}\|^2}A\overrightarrow{b}=A\overrightarrow{b}+\overrightarrow{c}=\overrightarrow{A_0B_t}$$

    Notice that $A,V\in O(\mathbb{R}^n)$, so: 
    \begin{equation}
        \begin{aligned}
        \|\overrightarrow{A_0B_t}\|
        &\le\|\overrightarrow{X}\|+\left\| \frac{AV\overrightarrow{X}}{\|V\overrightarrow{X}\|^2}\right\|+\left\|\frac{2(V\overrightarrow{X},\overrightarrow{b})}{\|V\overrightarrow{X}\|^2\|\overrightarrow{b}\|^2}V\overrightarrow{b}\right\|\\
        &\le\|\overrightarrow{X}\|+\frac{3}{\|\overrightarrow{X}\|}
        \end{aligned}
    \end{equation}

    and
    \begin{equation}
        \begin{aligned}
        \|\overrightarrow{A_0B_t}-\overrightarrow{X}\|
            &\le\left\|\frac{AV\overrightarrow{X}}{\|V\overrightarrow{X}\|^2}\right\|+\left\|\frac{2(V\overrightarrow{X},\overrightarrow{b})}{\|V\overrightarrow{X}\|^2\|\overrightarrow{b}\|^2}V\overrightarrow{b}\right\|\\
            &\le\frac{3}{\|\overrightarrow{X}\|}
        \end{aligned}
    \end{equation}

    When $\|t\|=\|\overrightarrow{A_0B_t}\|\to \infty$, we get $\|\overrightarrow{X}\|\to 0$ or $\infty$ by $(1)$ and $\|\overrightarrow{A_0B_t}-\overrightarrow{X}\|\to 0$ as $\|\overrightarrow{X}\|\to \infty$ by $(2)$.

    Denote the two roots as $x_1$ and $x_2$, then $x_1\to a_0$, $x_2\to b_t$ as $\|t\|\to \infty$, where $a_0$, $b_t$ are the images of $A_0$ ,$B_t$ respectively under the orthogonal projection from $\partial \mathcal{H}_\infty$ to $\partial \mathbb{H}^n$.

    Recall that $C_t, D_t$ are the intersection of $\gamma_t$ and $\partial \mathcal{H}_\infty$. We get $C_t\to A_0$ and $D_t\to B_t$.
\end{proof}

Consider the projection image $\pi(\gamma_t)$ of $\gamma_t$ in $M$.
We will show that it contains two distinct arcs. Denote the arc $g_t^{-1}(D_t)C_t$ as $s_t$ and arc $C_tD_t$ as $l_t$. Then $\pi(s_t)$ is the part of $\pi(\gamma_t)$ outside $\mathcal{T}$ and $\pi(l_t)$ is the part inside $\mathcal{T}$, where $\mathcal{T}$ is the maximal embedded horocusp and $\pi$ is the covering map of $\mathbb{H}^n\to M$ mentioned before.

Intuitively, when $\|t\|$ is sufficiently large, $\pi(s_t)$ will be a short geodesic segment very close to the self-tangency of the maximal horocusp, so $\pi(s_t)$ has no self-intersection.
The following Lemma \ref{lem2} gives a sufficient condition for $\pi(l_t)$ to have no self-intersection.
Recall that $\Gamma_\infty \subset \Gamma$ consists of all the isometries stabilizing $\mathcal{H}_\infty$.
Then $\Gamma_\infty$ stabilizes every Euclidean hyperplane parallel to $\partial H_\infty$.

\begin{lem}\label{lem2}
    When $\|t\|$ is large enough and if there is no points $M, N$ on the arc $l_t = C_tD_t$ such that there is an isometry $\tau\in \Gamma_\infty$ satisfying $\tau(M)=N$, then $\pi(\gamma_t)$ is simple closed geodesic.
    Moreover, the lemma is also true if we replace the arc $l_t = C_tD_t$ by the projected line segment $C_tD_t$ on $\partial\mathcal{H}_\infty$.
\end{lem}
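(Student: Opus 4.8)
The plan is to realize the closed geodesic $\pi(\gamma_t)$ as the concatenation of the two arcs $\pi(l_t)$ and $\pi(s_t)$ and to show that, once $\|t\|$ is large, each of them is embedded, the two meet only at the common endpoints $\pi(C_t)$ and $\pi(D_t)$, and these are distinct; an embedded circle results. That $\{l_t,g_t(s_t)\}$ is a fundamental domain for $\langle g_t\rangle$ acting on $\gamma_t$ comes from $g_t^{-1}(\mathcal H_\infty)=\mathcal H_0$: the horoballs $g_t^{k}(\mathcal H_\infty)$ are strung along $\gamma_t$, met in the arcs $g_t^{k}(l_t)$ with the $g_t^{k}(s_t)$ in between, and $\pi(g_t^{-1}(D_t))=\pi(D_t)$ is the gluing that closes the loop. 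I would also note that $g_t$ is primitive for $\|t\|$ large: by Lemma~\ref{lem1} one has $d_E(C_t,D_t)\to\infty$ and $\ell(s_t)\to 0$, so $\ell(l_t)\to\infty$; if $g_t=\delta^{k}$ with $k\geq 2$ then $\delta\notin\Gamma_\infty$, so $\mathcal H_\infty$ and $\delta(\mathcal H_\infty)$, being distinct members of the embedded family $\pi^{-1}(\mathcal T)$, have disjoint interiors, hence the subarcs $l_t=\gamma_t\cap\mathcal H_\infty$ and $\delta(l_t)=\gamma_t\cap\delta(\mathcal H_\infty)$ of the $\delta$-invariant geodesic $\gamma_t$ have disjoint interiors, forcing $\ell(\delta)\geq\ell(l_t)$ and contradicting $\ell(\delta)=\ell(g_t)/k\leq(\ell(l_t)+\ell(s_t))/2<\ell(l_t)$.

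Next, $\pi|_{s_t}$ is injective for $\|t\|$ large. The endpoints $C_t$ and $g_t^{-1}(D_t)$ of $s_t$ both tend to $A_0$: the first by Lemma~\ref{lem1}, the second because $g_t^{-1}(B_t)=A_0$ while $d_{\mathbb{H}^n}(D_t,B_t)\leq d_E(D_t,B_t)\to 0$. Hence $\ell(s_t)\to 0$ and, balls in $\mathbb{H}^n$ being convex, $s_t\subset B_{\mathbb{H}^n}(A_0,\varepsilon_t)$ with $\varepsilon_t\to 0$, so $\pi(s_t)\subset B_M(A,\varepsilon_t)$; once $\varepsilon_t<\mathrm{injrad}_M(A)$ this ball is isometric to a ball in $\mathbb{H}^n$, in which a geodesic arc is embedded. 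Since among the horoballs of $\pi^{-1}(\mathcal T)$ only $\mathcal H_0$ and $\mathcal H_\infty$ contain $A_0$ ($A$ being a self-tangency) and $\pi^{-1}(\mathcal T)$ is locally finite, for $\|t\|$ large $s_t$ meets no horoball of $\pi^{-1}(\mathcal T)$ besides $\mathcal H_0$ and $\mathcal H_\infty$, which it touches only in its endpoints $g_t^{-1}(D_t)$ and $C_t$. Consequently no $\Gamma$-translate of a point of $\mathrm{int}(s_t)$ lies in any horoball of $\pi^{-1}(\mathcal T)$; in particular $\pi(\mathrm{int}(s_t))$ is disjoint from $\mathcal T\supseteq\pi(l_t)$, so $\pi(s_t)\cap\pi(l_t)\subseteq\{\pi(C_t),\pi(D_t)\}$, and also $g_t(\mathrm{int}(s_t))$ avoids every horoball of $\pi^{-1}(\mathcal T)$.

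It remains to prove that $\pi|_{l_t}$ is injective and that $\pi(C_t)\neq\pi(D_t)$. The crucial input is that $\mathcal T$ is an embedded horocusp, so the horoballs of $\pi^{-1}(\mathcal T)$ have pairwise disjoint interiors; hence if $\mathrm{int}(\tau(\mathcal H_\infty))$ meets $\mathcal H_\infty$ then $\tau(\mathcal H_\infty)=\mathcal H_\infty$, that is, $\tau\in\Gamma_\infty$. Let $x\neq y$ lie in $s_t\cup l_t$ with $\pi(x)=\pi(y)$ and $\{x,y\}\neq\{g_t^{-1}(D_t),D_t\}$, and write $y=\tau(x)$, $\tau\in\Gamma$. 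If both lie in $s_t$, this contradicts the previous paragraph. If $x\in l_t\setminus\{C_t,D_t\}\subset\mathrm{int}(\mathcal H_\infty)$, then $y\in\mathrm{int}(\tau(\mathcal H_\infty))$, and since $\mathrm{int}(s_t)$ meets no horoball interior, $y\in l_t\subset\mathcal H_\infty$; thus $\tau\in\Gamma_\infty$ and we obtain a pair on $l_t$ forbidden by hypothesis. The remaining cases all involve $C_t$ or $D_t$, and the same disjointness reduces each to a forbidden $\Gamma_\infty$-pair on $l_t$ or to some $\tau$ with $\tau(C_t)=D_t$. In the latter case $\tau(\mathcal H_\infty)$ is a horoball of $\pi^{-1}(\mathcal T)$ through $D_t$; if $\tau\notin\Gamma_\infty$, it is tangent to $\partial\mathcal H_\infty$ at $D_t$ from below, but $\gamma_t$ crosses $\{x_n=1\}$ transversally at $D_t$ (the semicircle $\gamma_t$ is not tangent to $\{x_n=1\}$ at $D_t\neq C_t$), so the arc $g_t(s_t)$ issuing from $D_t$ would at once enter $\tau(\mathcal H_\infty)$, contradicting that $g_t(\mathrm{int}(s_t))$ avoids all horoballs of $\pi^{-1}(\mathcal T)$; and $\tau\in\Gamma_\infty$ with $\tau(C_t)=D_t$ is forbidden by the hypothesis applied to the endpoints of $l_t$. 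I expect this last analysis --- separating interior from boundary points, and above all the transversality argument excluding $\pi(C_t)=\pi(D_t)$, since a priori $D_t$ could be $\Gamma$-equivalent to a self-tangency of $\partial\mathcal T$ --- to be the real obstacle.

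For the ``moreover'', the argument is formal: each $\tau\in\Gamma_\infty$ is parabolic fixing $\infty$, hence preserves every horizontal hyperplane $\{x_n=c\}$ and acts on it by a Euclidean isometry independent of $c$; so the vertical projection $p\colon\mathcal H_\infty\to\partial\mathcal H_\infty$, which carries $l_t$ onto the segment $C_tD_t$, commutes with the $\Gamma_\infty$-action. If $M\neq N$ lie on $l_t$ with $\tau(M)=N$ for some $\tau\in\Gamma_\infty$, then $M$ and $N$ have equal height, so $p(M)\neq p(N)$ lie on the segment $C_tD_t$ and satisfy $\tau(p(M))=p(N)$. Hence the hypothesis stated for the segment is a priori stronger than the one for the arc and implies it, so the first part of the lemma applies verbatim.
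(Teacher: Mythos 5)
Your proposal is correct and follows the same strategy as the paper's proof: split $\pi(\gamma_t)$ into $\pi(s_t)$ and $\pi(l_t)$, handle $s_t$ by its shortness near $A_0$, reduce any self-intersection of $\pi(l_t)$ to a $\Gamma_\infty$-identified pair using the disjointness of the horoballs in $\pi^{-1}(\mathcal{T})$, and derive the ``moreover'' from the fact that vertical projection commutes with $\Gamma_\infty$. The paper's own proof is only a three-line sketch, and you supply several details it omits --- the primitivity of $g_t$, the disjointness of $\pi(\mathrm{int}(s_t))$ from $\mathcal{T}$, and the transversality argument ruling out $\pi(C_t)=\pi(D_t)$ --- all of which check out.
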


\begin{proof}
    By Lemma \ref{lem1}, $C_t$ and $g_t^{-1}(D_t)$ approach $A_0$ as $\|t\|\to \infty$. $\pi(s_t)$ is embedding because $\Gamma$ is discrete.

    If $\pi(l_t)$ is not embedding, there must be some points $M, N$ such that $\tau(M)=N$ where $\tau \in \Gamma_\infty$. This is the first claim.
    Then we consider the case of the projected line segment $C_tD_t$. Denote $M', N'$ to be the images of $M$ and $N$ under projection. According to the given conditions, there are no maps from $M'$ to $N'$, hence there are no maps from $M$ to $N$ neither.
\end{proof}

\section{The $n$-dimensional case with toral cusps}\label{sec3}

We are now ready to prove the first theorem in the paper which is Theorem \ref{ThmZnCusp0} in Introduction:

\begin{thm}\label{thm1}
    A cusped nonelementary hyperbolic $n$-manifold contains infinitely many simple closed geodesics if there is a cusp which is a flat $(n-1)$-manifold $\mathbb{R}^{n-1}/\Gamma_c$ such that $\Gamma_c \cong \mathbb{Z}^{n-1}$ for $n\ge 3$.
\end{thm}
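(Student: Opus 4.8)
The plan is to realize the hypothesis geometrically: a cusp of the form $\mathbb{R}^{n-1}/\Gamma_c$ with $\Gamma_c\cong\mathbb{Z}^{n-1}$ means that the group $\Gamma_\infty$ of isometries stabilizing the horoball $\mathcal{H}_\infty$ is, up to the conjugation fixed in Section \ref{sec2}, a lattice of pure Euclidean translations of rank $n-1$ acting on $\partial\mathcal{H}_\infty\cong\mathbb{R}^{n-1}$. Thus I may assume each $t\in\Gamma_\infty$ has the form $t(\overrightarrow{X})=\overrightarrow{X}+\overrightarrow{c}_t$ with $\overrightarrow{c}_t$ ranging over a full-rank lattice $L\subset\mathbb{R}^{n-1}\times\{0\}\subset\mathbb{R}^n$. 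In the notation of Section \ref{sec2} this gives $B_t=g_t(A_0)=B_0+\overrightarrow{c}_t$, and $\|t\|=d_E(A_0,B_t)=\|\overrightarrow{A_0B_0}+\overrightarrow{c}_t\|\to\infty$ along any sequence of lattice points leaving every bounded set. The key simplification is that, since $t$ is a translation, the second invariant (the $A$ matrix of Lemma \ref{lem1}) is trivial; so $g_t$ is eventually hyperbolic and Lemmas \ref{lem1} and \ref{lem2} apply directly, and I only need to verify the hypothesis of Lemma \ref{lem2}: no two points of the segment $C_tD_t$ are related by an element of $\Gamma_\infty$.

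Next I would estimate the segment $C_tD_t$ on $\partial\mathcal{H}_\infty$. By Lemma \ref{lem1}, $C_t\to A_0$ and $D_t\to B_t$, so the projected segment is close to the straight Euclidean segment $[A_0,B_t]$, which for large $\|t\|$ runs essentially in the direction of $\overrightarrow{c}_t$ and has length $\approx\|t\|$. Suppose points $M,N$ on this segment satisfy $\tau(M)=N$ for some $\tau\in\Gamma_\infty$, i.e. $N-M=\overrightarrow{c}_\tau\in L$; since $M,N$ lie on a near-straight segment in direction roughly $\overrightarrow{c}_t$, the lattice vector $\overrightarrow{c}_\tau$ would have to be (nearly) parallel to $\overrightarrow{c}_t$ with length at most $\approx\|t\|$. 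To rule this out I would choose $\overrightarrow{c}_t$ to be a \emph{primitive} vector of $L$ whose Euclidean length is minimal among primitive vectors in its own rational direction — concretely, along any primitive ray in $L$ take the shortest lattice vector; then the only lattice vectors parallel to $\overrightarrow{c}_t$ of length $\le\|t\|$ are $\pm\overrightarrow{c}_t$ and $\overrightarrow{0}$, and none of these can equal $N-M$ for distinct $M,N$ both in $[A_0,B_t]$ (the total length being only a definite amount above $\|\overrightarrow{c}_t\|-$ error, one uses the small perturbation bound from Lemma \ref{lem1} to exclude $\pm\overrightarrow{c}_t$ as well, since $M=A_0,N=B_t$ is excluded by $C_t\ne A_0$, $D_t\ne B_t$). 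Since $L$ has rank $n-1\ge2$, there are infinitely many distinct primitive directions in $L$, hence infinitely many choices of $\overrightarrow{c}_t$ producing geodesics $\pi(\gamma_t)$ that are pairwise distinct (their lengths, or their homotopy classes, differ); by Lemma \ref{lem2} each such $\pi(\gamma_t)$ is a simple closed geodesic, giving infinitely many.

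The main obstacle I anticipate is the quantitative control needed to pass from ``$C_t$ close to $A_0$, $D_t$ close to $B_t$'' to ``no element of $\Gamma_\infty$ identifies two points of the \emph{perturbed} segment $C_tD_t$''. Lemma \ref{lem1} is only asymptotic, so I must make its error terms effective — the inequalities $(1)$ and $(2)$ give $\|\overrightarrow{X}-\overrightarrow{A_0B_t}\|\le 3/\|\overrightarrow{X}\|$, which for the far root yields $d_E(D_t,B_t)=O(1/\|t\|)$ and similarly $d_E(C_t,A_0)=O(1/\|t\|)$, so the projected segment lies within an $O(1/\|t\|)$-neighborhood of $[A_0,B_t]$. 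Then any lattice vector $\overrightarrow{c}_\tau=N-M$ with $M,N$ on $C_tD_t$ satisfies $\mathrm{dist}(\overrightarrow{c}_\tau,\ \mathbb{R}\,\overrightarrow{c}_t)=O(1/\|t\|)$ and $\|\overrightarrow{c}_\tau\|\le\|t\|+O(1/\|t\|)$; since $L$ is discrete, for $\|t\|$ large the only lattice vector within $O(1/\|t\|)$ of the line $\mathbb{R}\,\overrightarrow{c}_t$ are the multiples of $\overrightarrow{c}_t$ itself, and the length bound plus primitivity leaves only $\overrightarrow{0},\pm\overrightarrow{c}_t$, each of which is excluded as above. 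Care is also needed to check that $g_t$ is genuinely hyperbolic (not parabolic) for large $\|t\|$ and that the resulting geodesics are genuinely distinct rather than repeats of finitely many; both follow from the translation length of $g_t$ tending to infinity, which in turn follows from the endpoints $C_t,D_t$ becoming far apart in the hyperbolic metric as $\|t\|\to\infty$.
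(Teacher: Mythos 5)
Your overall strategy is the paper's (reduce to Lemma \ref{lem2}, approximate the projected segment $C_tD_t$ by $[A_0,B_t]$ using the quantitative form of Lemma \ref{lem1}, and rule out lattice translations identifying two of its points), but the step that carries the whole argument rests on a false estimate. The segment runs from $A_0$ to $B_t=B_0+\overrightarrow{c}_t$, so its direction is $\overrightarrow{A_0B_0}+\overrightarrow{c}_t$, \emph{not} $\overrightarrow{c}_t$. A difference $N-M=\lambda\,\overrightarrow{C_tD_t}$ with $|\lambda|$ of order $1$ therefore lies at distance up to $|\lambda|$ times the component of $\overrightarrow{A_0B_0}$ orthogonal to $\overrightarrow{c}_t$, plus $O(1/\|t\|)$, from the line $\mathbb{R}\overrightarrow{c}_t$: a quantity of constant order, not $O(1/\|t\|)$. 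So the discreteness step (``the only lattice vectors within $O(1/\|t\|)$ of $\mathbb{R}\overrightarrow{c}_t$ are its multiples'') does not apply; and even taken literally that claim fails for long primitive $\overrightarrow{c}_t$, since the nearest parallel lattice line sits at distance $\mathrm{covol}(L)/\|\overrightarrow{c}_t\|=O(1/\|t\|)$.

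Moreover, the conclusion you want --- that \emph{every} shortest primitive $\overrightarrow{c}_t$ works --- is genuinely false, not merely unproved. Take $L=\mathbb{Z}^2$, $A_0=(0,0)$, $B_0=(\tfrac12,\tfrac12)$, and $\overrightarrow{c}_t=(p,q)$ primitive with $p\equiv q\equiv 1\ (\mathrm{mod}\ 3)$ and $p\neq q$ (e.g.\ $(1,4),(4,7),(1,7),\dots$, infinitely many and of arbitrarily large length). Then $\overrightarrow{A_0B_t}=\tfrac12(2p+1,2q+1)$ points in a rational direction whose primitive lattice vector $v$ has length at most $\tfrac23\|\overrightarrow{A_0B_t}\|$, so $v\neq 0,\pm\overrightarrow{c}_t$ joins two interior points of $[A_0,B_t]$; for $(p,q)=(1,4)$ one gets $v=(1,3)$ on the segment from $(0,0)$ to $(\tfrac32,\tfrac92)$. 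Your argument would certify all of these $t$, and it cannot; whether the $O(1/\|t\|)$ perturbation from $[A_0,B_t]$ to $C_tD_t$ happens to rescue them is exactly the delicate point left unaddressed. The repair is what the paper does: work in the basis $t_1,\dots,t_{n-1}$ of $L$, arrange that $B_0-A_0$ has a nonzero coordinate $b_2$ transverse to the chosen direction, take $t=t_1^n$, and use the \emph{exact} integrality of the transverse coordinates of $N-M$ (each bounded by $b_i+2\varepsilon<1$, hence zero) to force $N-M\parallel t_1$, which contradicts $|(C_t)_2-(D_t)_2|\ge b_2-2\varepsilon>0$ unless $M=N$. The exact collinearity of $M,N$ with $C_t,D_t$ and the nonvanishing transverse component of $\overrightarrow{A_0B_0}$ are the engine of the proof; your version treats them as error terms to be absorbed, and that is where it breaks.
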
   

In this case, $\Gamma_\infty$ corresponding to this cusp is generated by $(n-1)$ translations when we restrict isometries in $\Gamma_\infty$ to the boundary of the horoball $\mathcal{H}_\infty$.

\begin{proof}
    It suffices to show that there are infinitely many $\gamma_t$ whose $l_t$ is embedded by Lemma \ref{lem2}.

    Let $\Gamma_\infty$ is generated by $t_1, t_2, \cdots, t_{n-1}$ which are all linearly independent translations. Consider $\partial \mathcal{H}_\infty = \mathbb{R}^{n-1} $ and take the translation vectors of $t_1, t_2, \cdots, t_{n-1}$ as a basis in it.

    Let $A_0=(0, \cdots, 0), B_0=(b_1,\cdots,b_{n-1})$ where $0\le b_i < 1$ because we set $B_0$ in tha same fundamental domain of $\mathcal{H}_\infty$ with $A_0$.
    It is also easy to see that not all $b_i$ equal to $0$. We assume $b_2\neq 0$ without loss of generality.

    Take the isometry $\tau_n=t_1^n$, so $B_{\tau_n}=(b_1+n,b_2,\cdots,b_{n-1})$. 
    We assume the $l_{\tau_n}$ is not embedded to get a contradiction. Assume there are points $M, N$ in the line segment $C_{\tau_n}D_{\tau_n}$ on $\partial \mathcal{H}_\infty$ and there is a $t=\sum_{i=0}^{n-1}w_it_i\in \Gamma_\infty$ such that $t(M)=N$ where $w_i\in \mathbb{Z}$. Then $(N_1,\cdots,N_{n-1})=(M_1+w_1,\cdots,M_{n-1}+w_{n-1})$ where $M_i, N_i$ is the $i$-th coordinate of $M, N$.

    Due to Lemma \ref{lem1}, we take $n$ large enough such that $\|C_{\tau_n}-A_0\|, \|D_{\tau_n}-B_{\tau_n}\|<\varepsilon$ for some $\varepsilon$ small suffiently.
    Then $\|M_i-N_i\|\le \|(C_{\tau_n})_i-(D_{\tau_n})_i\|\le b_i+2\varepsilon<1$, we know that $w_i=0$, $2\le i\le n-1$.

    Then $t=w_1t_1$. 
    Hence $\overrightarrow{MN} \parallel t_1$, here we denote the translation vector of $t_1$ as $\overrightarrow{t_1}$.
    Since $M$ and $N$ are on the line segment $C_{\tau_n}D_{\tau_n}$, $C_{\tau_n}D_{\tau_n}\parallel \overrightarrow{t_1}$, too.
    However, $\|(C_{\tau_n})_2-(D_{\tau_n})_2\|\ge b_2-2\varepsilon>0$ since $b_2\neq 0$.
    It means that $\overrightarrow{MN} \nparallel \overrightarrow{t_1}$. This is a contraction.

    So $\gamma_{\tau_n}$ is a simple closed geodesic when $n$ is large enough. This leads to the conclusion. 
\end{proof}

\begin{remark}
    The method follows \cite{MR2263061}. In \cite{MR2263061}, S.M. Kuhlmann first use fractional linear transformation to get the result of Lemma \ref{lem1} in $3$-dimensional case. Then she proposes a certain family of translations $B_{\tau_n}$ and use this method to finish her proof, since the cusp of cusped orientable hyperbolic $3$-manifold of finite-volume is a torus, which means $\Gamma_c\cong \mathbb{Z}^2$. 
\end{remark}

The main theorem of \cite{MR2263061} is a special case of Theorem \ref{thm1}:

\begin{coro}\label{coro1}
    Any cusped finite-volume orientable hyperbolic $3$-manifold contains infinitely many simple closed geodesics.
\end{coro}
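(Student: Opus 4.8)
The plan is to deduce this directly from Theorem \ref{thm1} by checking that the hypothesis is automatically satisfied. First I would recall the standard structure theory of cusps of finite-volume hyperbolic $3$-manifolds: a finite-volume cusped hyperbolic $3$-manifold $M$ has finitely many cusps, each of which, after truncating by a maximal embedded horocusp, has boundary a compact flat $2$-manifold. When $M$ is \emph{orientable}, each cusp cross-section is an orientable closed flat $2$-manifold, and the only such manifold is the torus $T^2$. Equivalently, the cusp subgroup $\Gamma_c$ is a rank-$2$ parabolic subgroup, so $\Gamma_c \cong \mathbb{Z}^2 = \mathbb{Z}^{n-1}$ with $n = 3$. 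Since $M$ is cusped it has at least one cusp, and since $M$ has finite volume $\pi_1(M)$ is a lattice, hence nonelementary. Thus every hypothesis of Theorem \ref{thm1} holds with $n = 3$, and the conclusion follows.

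Concretely, I would write: let $\mathcal{C}$ be any cusp of $M$; by the Margulis lemma the associated horospherical cross-section is a compact Euclidean $2$-manifold whose fundamental group $\Gamma_c$ is the peripheral subgroup, a discrete group of parabolic isometries fixing the cusp point. Orientability of $M$ forces this flat surface to be orientable, and the classification of flat $2$-manifolds (only $T^2$ and the Klein bottle, the latter non-orientable) gives $\Gamma_c \cong \mathbb{Z}^2$. Then $\mathbb{R}^{n-1}/\Gamma_c$ with $n = 3$ is exactly the torus cusp, so Theorem \ref{thm1} applies verbatim and produces infinitely many simple closed geodesics in $M$.

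There is essentially no genuine obstacle here; the only thing requiring care is making sure the ``cusped'' hypothesis is used to guarantee the existence of at least one cusp to which Theorem \ref{thm1} can be applied, and that ``finite-volume'' (together with the fact that a finite-covolume Kleinian group without parabolics is cocompact) is what forces the cusp cross-sections to be closed flat $2$-manifolds rather than noncompact ones. Both points are classical and can be cited; after that the corollary is an immediate specialization of Theorem \ref{thm1}, which is why it is stated as a corollary rather than proved independently. If desired I would add one sentence noting that this recovers the main result of \cite{MR2263061}.
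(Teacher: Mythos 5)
Your proposal is correct and matches the paper's (implicit) argument: the paper states Corollary \ref{coro1} as an immediate specialization of Theorem \ref{thm1}, noting in the preceding remark that the cusp of a cusped finite-volume orientable hyperbolic $3$-manifold is a torus, so $\Gamma_c \cong \mathbb{Z}^2$. Your additional care about nonelementarity and compactness of the cusp cross-sections only makes explicit what the paper leaves unsaid.
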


\section{The non-orientable $3$-dimensional case}\label{sec4}

In this section, we consider the case of nonelementary hyperbolic $3$-manifolds with a virtually rank-$2$ cusp. First, we discuss all the elements in $\Gamma_\infty$ if there are some non-orientable transformations in it.

If there is an isometry $s\in \Gamma_\infty$ and $s$ is orientation-reversing, then $s$ is a glide reflection. Since $\Gamma_\infty$ act freely, all the translation vectors of glide reflections in $\Gamma_\infty$ are parallel to the translation vector of $s$.
Suppose $s$ contains the shortest translation distance without loss of generality. 
Take an appropriate coordinate system, we can set $s(\vec{X})=\begin{pmatrix}1&0\\0&-1\end{pmatrix}\vec{X}+\begin{pmatrix}\alpha\\0\end{pmatrix}$, where $\alpha>0$.

Since the hyperbolic manifolds we consider have a virtually rank-$2$ cusp, there exists some translation whose vector is not parallel to the vector of $s$.
For any translation $t'(\vec{X})=\vec{X}+\begin{pmatrix}a\\b\end{pmatrix}\in\Gamma_\infty,(b\neq0)$, $st's^{-1}(\vec{X})=\vec{X}+\begin{pmatrix}a\\-b\end{pmatrix}$ and $\vec{X}+\begin{pmatrix}0\\2b\end{pmatrix}=(t'-sts^{-1})(\vec{X})\in \Gamma_\infty$.
Then the translation vector of $t'-sts^{-1}$ is perpendicular to $\begin{pmatrix}\alpha\\0\end{pmatrix}$.

Take a translation of smallest distance among those translations whose vector is perpendicular to $\begin{pmatrix}\alpha\\0\end{pmatrix}$. Denote it as $t(\vec{X})=\vec{X}+\begin{pmatrix}0\\\beta\end{pmatrix}$.
Then $s$, $t$ are the generators of $\Gamma_\infty$.
For any $w \in\Gamma_\infty$, we can write it as $w=s^nt^m$ since $tst=s$. Then we can write down all the elements in $\Gamma_\infty$:

If $n=2k$, $w(\vec{X})=\vec{X}+\begin{pmatrix}n\alpha\\m\beta\end{pmatrix}$ is a translation.

If $n=2k+1$, $w(\vec{X})=\begin{pmatrix}1&0\\0&-1\end{pmatrix}\vec{X}+\begin{pmatrix}n\alpha\\-m\beta\end{pmatrix}$ is a glide reflection whose reflection axis is $y=-m\beta/2$ and translation distance is $n\alpha$ along $x$-axis. So all the reflection axis of glide reflection in $\Gamma_\infty$ is $m\beta/2$ for some $m\in\mathbb{Z}$. 

With the above description of a nonorientable cusp in mind, we prove the following theorem.

\begin{thm}\label{thm2}
    Any nonelementary hyperbolic $3$-manifold with a virtually rank-$2$ cusp contains infinitely many simple closed geodesics.
    In particular, any cusped finite-volume hyperbolic $3$-manifold contains infinitely many simple closed geodesics.
\end{thm}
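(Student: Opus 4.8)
The plan is to split into two cases according to $\Gamma_\infty$, and in each to produce an explicit one‑parameter subfamily $\tau_N\in\Gamma_\infty$ for which $\pi(\gamma_{\tau_N})$ is a simple closed geodesic by Lemma~\ref{lem2}. If $\Gamma_\infty$ contains no orientation‑reversing isometry it acts on $\partial\mathcal H_\infty$ by translations, so $\Gamma_\infty\cong\mathbb Z^2$ and Theorem~\ref{thm1} applies; this already covers a finite‑volume cusped $M$ with a torus cusp, while a finite‑volume cusped $M$ with a Klein‑bottle cusp falls into the remaining case. So I may assume $\Gamma_\infty$ contains a glide reflection and use the description obtained above: in suitable coordinates $\Gamma_\infty=\langle s,t\rangle$ with $s(x,y)=(x+\alpha,-y)$, $t(x,y)=(x,y+\beta)$, the translations in $\Gamma_\infty$ are $(x,y)\mapsto(x+2k\alpha,y+m\beta)$, and the glide reflections are $(x,y)\mapsto(x+n\alpha,-y-m\beta)$ with $n$ odd and horizontal reflection axis $y=-m\beta/2$. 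I normalise $B_0=g_0(A_0)=(b_1,b_2)$, taking $b_2$ to be the canonical representative of its orbit in $[0,\beta/2]$. One preliminary fact is used throughout: $B_0\notin\Gamma_\infty\cdot A_0$. Indeed, if $\tau(A_0)=B_0$ for some $\tau\in\Gamma_\infty$, then $\phi:=\tau^{-1}g_0\in\Gamma$ fixes $A_0$ and maps $\mathcal H_0$ to $\mathcal H_\infty$; since $\phi(0)=\infty$ (base points), $\phi$ sends the vertical geodesic through $A_0$ to a geodesic through $A_0$ with an endpoint at $\infty$, which must again be that vertical geodesic, so $\phi$ interchanges $0$ and $\infty$, hence interchanges $\mathcal H_\infty$ and the unique lift $\mathcal H_0$ of $\mathcal T$ based at $0$; then $\phi^2$ stabilises both horoballs and must be trivial, contradicting torsion‑freeness. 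In particular $B_0\ne A_0$.

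Case $b_2\ne 0$. I would take $\tau_N=s^{2N}$, so $B_{\tau_N}=(b_1+2N\alpha,b_2)$ and $\|\tau_N\|\to\infty$; by Lemma~\ref{lem1}, $g_{\tau_N}$ is hyperbolic for large $N$ and the endpoints $C_{\tau_N},D_{\tau_N}$ of $\gamma_{\tau_N}$ are within some $\varepsilon_N\to0$ of $A_0$ and $B_{\tau_N}$. I use the arc form of Lemma~\ref{lem2}. Suppose $\tau\in\Gamma_\infty\setminus\{1\}$ carries $M$ to $N$ with $M\ne N$ on $l_{\tau_N}$. Every element of $\Gamma_\infty$ preserves the height coordinate on $\mathcal H_\infty$, and $l_{\tau_N}$ is a semicircular arc with both endpoints at height $1$, so $M,N$ lie at a common height and hence are symmetric about the apex of $l_{\tau_N}$; thus their vertical projections $M',N'$ to $\partial\mathcal H_\infty$ satisfy $M'+N'=C_{\tau_N}+D_{\tau_N}$, and $N'-M'$ is parallel to the direction of the segment $C_{\tau_N}D_{\tau_N}$, which is $\varepsilon_N$‑close to $(b_1+2N\alpha,b_2)$. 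If $\tau$ is a translation $(x,y)\mapsto(x+2k\alpha,y+m\beta)$, then $|m\beta|=|N'_y-M'_y|\le b_2+O(\varepsilon_N)<\beta$ forces $m=0$, and then $(2k\alpha,0)$ cannot be parallel to a vector with nonzero second coordinate, so $k=0$ and $\tau=1$, a contradiction. If $\tau$ is a glide reflection with axis $y=c$ and translation part $(n\alpha,0)$, then $M'+N'=(2M'_x+n\alpha,\,2c)$, so $2c=C_{\tau_N,y}+D_{\tau_N,y}=b_2+O(\varepsilon_N)$; but $2c\in\beta\mathbb Z$ and $b_2+O(\varepsilon_N)$ lies in $(-\beta/2,\beta)$ for large $N$, so $2c=0$ and $b_2=O(\varepsilon_N)\to0$, contradicting that $b_2$ is a fixed positive number. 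Hence $l_{\tau_N}$ satisfies the hypothesis of Lemma~\ref{lem2}, and $\pi(\gamma_{\tau_N})$ is a simple closed geodesic for every large $N$.

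Case $b_2=0$. Here the segment $C_{\tau_N}D_{\tau_N}$ for the previous family would run along the glide axis $\{y=0\}$, so I would change the family. By the preliminary fact, $B_0=(b_1,0)\notin\{(p\alpha,q\beta):p,q\in\mathbb Z\}=\Gamma_\infty\cdot A_0$, hence $b_1\notin\alpha\mathbb Z$ and I may take $b_1\in(0,\alpha)$. Let $\tau_N=t^N$, so $B_{\tau_N}=(b_1,N\beta)$, $\|\tau_N\|\to\infty$, and $g_{\tau_N}$ is hyperbolic for large $N$. The segment $C_{\tau_N}D_{\tau_N}$ has $x$‑extent at most $b_1+O(\varepsilon_N)<\alpha$; a glide reflection shifts the $x$‑coordinate by an odd multiple of $\alpha$, so it cannot identify two points of the segment, and a translation identifying two of its points must have trivial $x$‑part, hence be vertical, which is impossible since the segment direction is $\varepsilon_N$‑close to $(b_1,N\beta)$ with $b_1\ne0$. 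By the projected‑segment form of Lemma~\ref{lem2}, $\pi(\gamma_{\tau_N})$ is a simple closed geodesic for all large $N$. In both cases $l_{\tau_N}$ penetrates $\mathcal T$ to a depth growing with $N$ (its apex has height of order $\|\tau_N\|$), so the $\pi(\gamma_{\tau_N})$ have lengths tending to infinity and are pairwise distinct, giving infinitely many simple closed geodesics.

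The hard part will be the glide reflections in the case $b_2\ne 0$: there the projected‑segment form of Lemma~\ref{lem2} is not enough, because near $A_0$ the segment $C_{\tau_N}D_{\tau_N}$ runs essentially along the glide axis $\{y=0\}$ and a glide $s^{n}$ really does identify two of its points. The way around it is to work with the arc $l_{\tau_N}$ and exploit that $\Gamma_\infty$ preserves height on $\mathcal H_\infty$, so that any identification on $l_{\tau_N}$ must be between the two points of the semicircle at a common height, and that configuration forces the midpoint of $C_{\tau_N}D_{\tau_N}$ onto a glide axis — which is exactly what the choice of $b_2\notin\beta\mathbb Z$ rules out. A minor point to settle along the way is that $g_{\tau_N}$ is hyperbolic for all large $N$, which follows from Lemma~\ref{lem1} since the two ideal fixed points it produces become distinct once $\|\tau_N\|$ is large.
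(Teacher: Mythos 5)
Your overall strategy is the same as the paper's: reduce to the case where $\Gamma_\infty$ contains a glide reflection, write $\Gamma_\infty=\langle s,t\rangle$ in adapted coordinates, and for each configuration of $A_0,B_0$ choose a family $\tau_N$ so that Lemma~\ref{lem2} applies, using height-preservation and symmetry about the apex of $l_{\tau_N}$ to control glide reflections. However, there is a genuine gap: you have implicitly normalized $A_0=(0,0)$ \emph{and} $s(x,y)=(x+\alpha,-y)$ at the same time, i.e.\ you have placed $A_0$ on a glide-reflection axis. That cannot be arranged in general: once the coordinates are adapted to $\Gamma_\infty$ the glide axes are the fixed family of lines $y=m\beta/2$, and the remaining freedom (horizontal translation, sign flips, shifting $y$ by $\beta/2$) does not let you move the tangency point $A_0$ onto one of them. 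Writing $A_0=(0,a)$, the quantity that must avoid $\beta\mathbb Z$ in your glide-reflection argument for $\tau_N=s^{2N}$ is $C_{\tau_N,y}+D_{\tau_N,y}=a+b_2+O(\varepsilon_N)$, not $b_2+O(\varepsilon_N)$. So your case ``$b_2\neq 0$'' breaks exactly when $A_0$ and $B_0$ are symmetric about a glide axis, e.g.\ $A_0=(0,-\beta/4)$, $B_0=(b_1,\beta/4)$: there the midpoint of $C_{\tau_N}D_{\tau_N}$ lands $\varepsilon_N$-close to the axis $y=0$, a glide reflection is a genuine threat, and the contradiction you derive (``$2c=b_2+O(\varepsilon_N)$ cannot lie in $\beta\mathbb Z$'') evaporates. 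This is precisely the configuration the paper isolates as its Case~2.1 and treats with the different family $\tau_n=t^n\circ s$, chosen so that the midpoint $y$-coordinate $-a+n\beta/2$ stays a definite distance from every $m\beta/2$. The same unwarranted normalization infects your case $b_2=0$: the identity $\Gamma_\infty\cdot A_0=\{(p\alpha,q\beta)\}$, from which you deduce $b_1\notin\alpha\mathbb Z$, holds only when $a=0$; if $a\neq 0$ one can have $b_1=0$, and then $\tau_N=t^N$ produces a nearly vertical segment on which vertical translations are not excluded by your argument.

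The repair is to let the case division depend on the correct invariants, namely $b_1-a_1$ for the $x$-extent arguments and $a_2+b_2 \bmod \beta$ for the location of the midpoint relative to the glide axes; this is what the paper's Cases 1, 2.1 and 2.2 with the three families $t^n$, $t^n\circ s$ and $s^{2n}$ accomplish. Your auxiliary observations --- that $B_0\notin\Gamma_\infty\cdot A_0$ (the paper only asserts $A_0\neq B_0$), the explicit justification that identified points of $l_{\tau_N}$ must be symmetric about its apex, and the remark that the lengths of $\pi(\gamma_{\tau_N})$ tend to infinity so the geodesics are pairwise distinct --- are correct and worth keeping, but they do not close the missing case.
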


\begin{proof}
    It is suffices to show the case if there are some orientation-reversing transformations in $\Gamma_\infty$.

    Due to the discussion above, we assume that $\Gamma_\infty$ is generated by the glide reflection $s$ and the translation $t$.
    Since we have showed that the translation vector of $s$ and $t$ are perpendicular, consider the fundamental region $\mathcal{D}=\{(x,y)~|~0\le x<\alpha,-\beta/2\le y<\beta/2\}$ in $\partial \mathcal{H}_\infty=\mathbb{R}^2$ with respect to the action of $\Gamma_\infty$. We have points $A_0,B_0\in \mathcal{D}$ and $A_0\neq B_0$. Below we consider different cases according to the position of $A_0$ and $B_0$.

    $\textbf{Case 1}$. Suppose that the x-coordinates of $A_0$ and $B_0$ are different.
    Consider $\tau_n(\vec{X})=\vec{X}+\begin{pmatrix}0\\n\beta\end{pmatrix}$ for any integer $n$. 
    Then $\pi(\gamma_{\tau_n})$ is a simple closed geodesic when $n$ is large enough as we proved in Theorem \ref{thm1}.

    \begin{figure}[h]
    \centering  
    \includegraphics[width=0.75\textwidth]{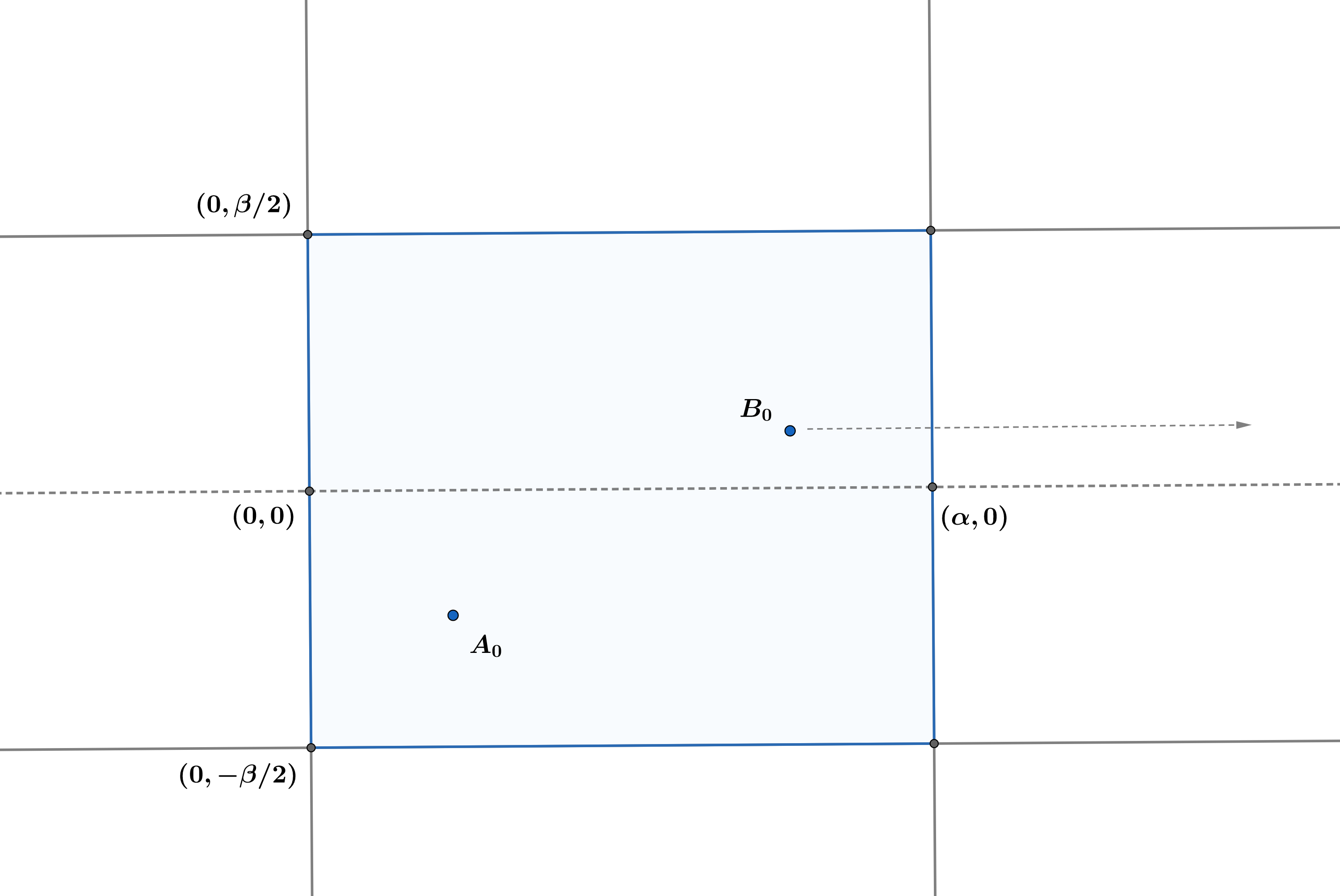}  
    \caption{Case 1}
    \end{figure} 

    $\textbf{Case 2}$. Suppose that the x-coordinates of $A_0$ and $B_0$ are the same. Hence their y-coordinates are different.
    Without loss of generality, we can assume that $A_0$ and $B_0$ are both in the y-axis.

    $\textbf{Case 2.1}$. We consider the case that their y-coordinates are symmetric about $y=0$ first. 

    Take $\tau_n=t^n\cdot s$ and $n$ large enough such that $d_E(C_{\tau_n},A_0), d_E(D_{\tau_n},B_{\tau_n})<\varepsilon$ for $\varepsilon$ small sufficiently where $d_E$ is the Euclidean distance on $\partial \mathcal{H}_\infty$.

    For all $M, N$ on the line segment $C_{\tau_n}D_{\tau_n}$, 
    \begin{equation}
        \begin{aligned}
            0<\|(M)_1-(N)_1\|
            & \le\|(C_{\tau_n})_1-(D_{\tau_n})_1\|\\
            & \le\|(A_0)_1-(B_{\tau_n})_1\|+2\varepsilon\\
            & =\alpha+2\varepsilon<2\alpha\nonumber
        \end{aligned}
    \end{equation}

    So if there is a $w\in \Gamma_\infty$ such that $w(M)=N$, $w$ will not be a translation since all the distance of translations along x-axis in $\Gamma_\infty$ are even number multiples of $\alpha$. 
    Then we assume $w$ is a glide reflection whose axis is $m\beta/2$ for some $m\in \mathbb{Z}$

    \begin{figure}[h]
    \centering  
    \includegraphics[width=0.75\textwidth]{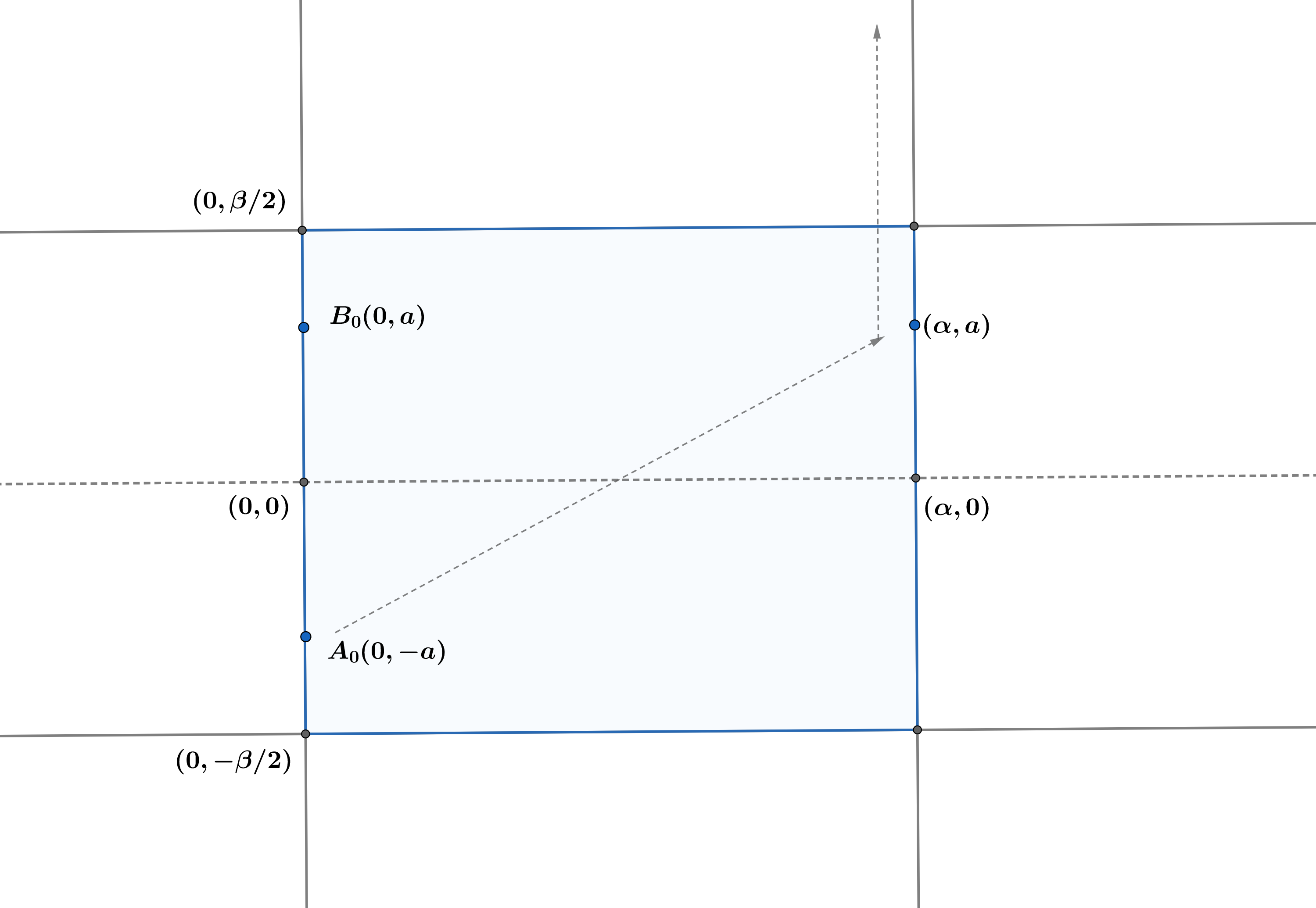}  
    \caption{Case 2.1}
    \end{figure} 

    Let $A_0=(0,-a), B_0=(0,a), 0<a<\frac{\beta}{2}$, then $B_{\tau_n}=(\alpha,-a+n\beta)$.  
    The y-coordinate of the middle point of $A_0$ and $B_{\tau_n}$ is $(-a+\frac{n\beta}{2})$. 
    Then we should discuss the points on the geodesic segment $l_t$ whose endpoints are $C_0$ and $D_{\tau_n}$ in place of the line segment $C_0D_{\tau_n}$. We will proof that if we take $n$ such that $0<|-a\pm\varepsilon|< \frac{\beta}{2}$, $l_t$ will not have self-intersections under any transformations in $\Gamma_\infty$.

    We consider the points $\bar{M}$ and $\bar{N}$ on the geodesic segment $l_t$ 
    The images of projection on $\partial \mathcal{H}_\infty$ of them are $M$ and $N$.
    If there is some isometry $w\in \Gamma_\infty$ such that $w(M)=N$, $\bar{M}$ and $\bar{N}$ must be on the same plane parallel with $\partial \mathcal{H}_\infty$.
    It means they are equidistant from the middle point $\bar{P}$ of $C_0$ and $D_{\tau_n}$ on the geodesic segment $l_t$. 

    \begin{figure}[h]
    \centering  
    \includegraphics[width=1\textwidth]{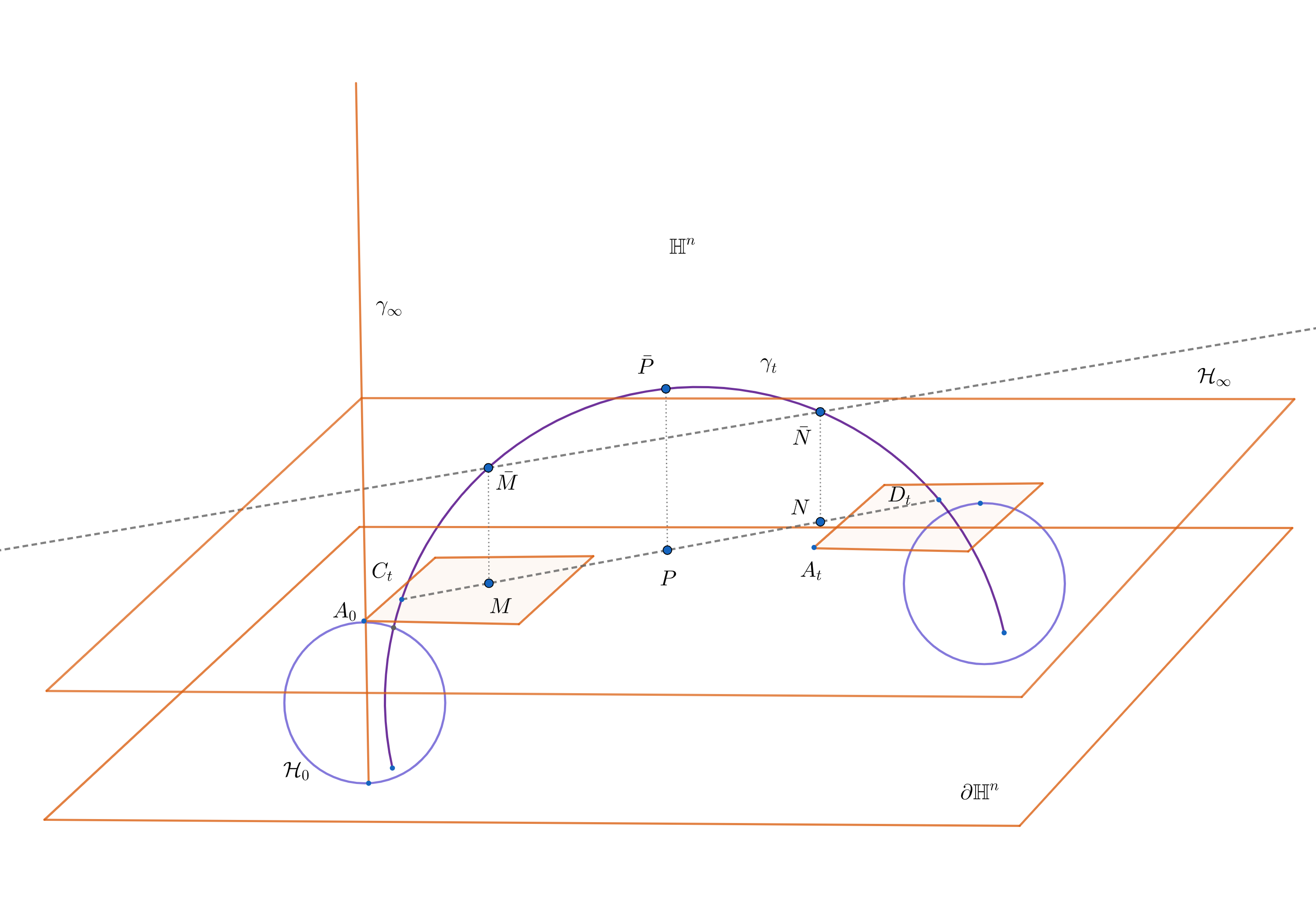}  
    \caption{$M$ and $N$ are symmetric}
    \end{figure} 

    Then the projected image $P$ of $\bar{P}$ on the $\partial\mathcal{H}_\infty$ is also the middle point of $C_0$ and $D_{\tau_n}$ on $\partial\mathcal{H}_\infty$, then $-a+\frac{n\beta}{2}-\varepsilon\le (P)_2\le -a+\frac{n\beta}{2}+\varepsilon$ because $d_E(C_{\tau_n},A_0), d_E(D_{\tau_n},B_{\tau_n})<\varepsilon$. However, $w$ is a glide reflection whose axis must be $m\beta/2$ for some $m\in \mathbb{Z}$. So $(P)_2=m\beta/2$.

    Now we take $n$ large enough such that $0<|-a\pm\varepsilon|< \frac{\beta}{2}$ to construct a contradiction.
    Hence $\pi(\gamma_{\tau_n})$ is a simple closed geodesic by Lemma \ref{lem2}.

    $\textbf{Case 2.2}$. Now we consider the situation that $A_0$ and $B_0$ are not symmetric about $y=0$. 

    \begin{figure}[h]  
    \centering  
    \includegraphics[width=0.75\textwidth]{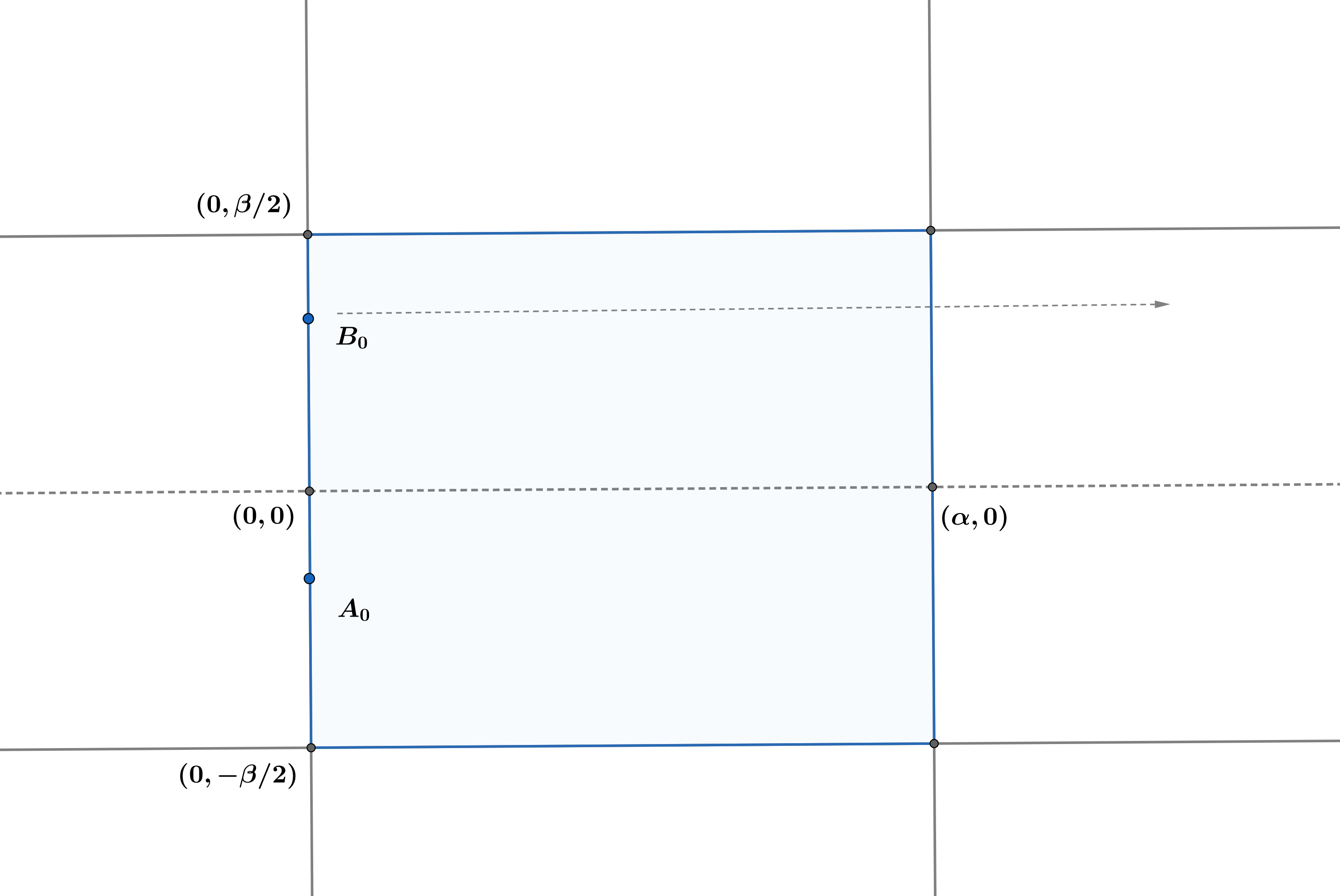}  
    \caption{Case 2.2}
    \end{figure} 

    We eliminate glide reflections first.
    According to the prove above, we only need to show that there are no reflection axis such that the $y$-coordinate of $A_0$ and $B_t$ are symmetric about it.
    Take $\tau_{2n}(\vec{X})=s^{2n}(\vec{X})=\vec{X}+\begin{pmatrix}2n\alpha\\0\end{pmatrix}$.
    Then the $y$-coordinate of $B_{\tau_{2n}}=\tau_{2n}(B_0)$ and $A_0$ are not symmetric about any reflection axis since we have assumed that $A_0$ and $B_0$ are not symmetric about $y=0$. So there is no glide reflection $w$ such that $l_t$ has self-intersections.

    On the other hand, the y-coordinate of $A_0$ and $B_{\tau_{2n}}$ are different, as well as $C_{\tau_{2n}}$ and $D_{\tau_{2n}}$ when we take $n$  large enough, so there is no translation $w'$ and points $M, N$ on the line segment $C_{\tau_{2n}}D_{\tau_{2n}}$ such that $w'(M)=N$ as the argment in Theorem \ref{thm1}.

    With Corollary \ref{coro1} and discussions in Case 1, Case 2.1 and Case 2.2 above, we complete the proof of Theorem \ref{thm2}.
\end{proof}

\section{Conclusions}\label{sec5}

The method is based on the picture of the structure of cusp.
It is known that the cusp of a hyperbolic $n$-manifold admits a flat metric.
It means that when we consider $n$-dimension cusped hyperbolic case, we should know the classification of $(n-1)$-dimension flat manifolds.
It seems that the method of this paper will likely work in $4$-dimensional case due to our knowledge about $3$-dimension flat manifolds.
We will continue to consider if any cusped finite-volume hyperbolic $4$-manifolds contain infinitely many simple closed geodesics.

However, the complexity of the structure of flat manifolds increases rapidly as the number of dimensions increases.
So when we consider the location of self-tangencies to construct simple closed geodesics, it will appear a lot of complications.
It seems that there will always be some special cases such that we have very few options to construct simple closed geodesics.
This will cause a lot of difficulties for our case-by-case approach to extend our conclusions to higher dimensions directly.

\bibliographystyle{plain}
\bibliography{wenxian}

\end{document}